\def\lf{\left}
\def\ri{\right}
\def\dbar{\bar\partial}
\def\R{\mathbb{R}}
\def\vv<#1>{\langle#1\rangle}
\def\G{\Gamma}
\def\bj{{\bar j}}
\def\bk{{\bar k}}
\def\bl{{\bar l}}
\def\bmu{{\bar\mu}}
\def\lam{{\lambda}}
\def\a{{\alpha}}
\def\be{{\beta}}
\def\om{{\omega}}
\def\XXint#1#2{\setbox0=\hbox{$#1{#2}{\int}$}{#2}\kern-.5\wd0 }
\def\XXint#1#2#3{{\setbox0=\hbox{$#1{#2#3}{\int}$}
     \vcenter{\hbox{$#2#3$}}\kern-.5\wd0}}
\def\vv<#1>{\lf\langle#1\ri\rangle}
\def\ol{\overline}
\newtheorem{thm}{Theorem}[section]
\newtheorem{lem}{Lemma}[section]
\newtheorem{cor}{Corollary}[section]
\theoremstyle{definition}
\newtheorem{defn}{Definition}[section]
\theoremstyle{remark}
\numberwithin{equation}{section}
\begin{document}
\title{Curvature of almost Hermitian manifolds and applications}
\author{Chengjie Yu$^1$ }
\address{Department of Mathematics, Shantou University, Shantou, Guangdong, 515063, China}
\email{cjyu@stu.edu.cn}
\thanks{$^1$Research partially supported by the National Natural Science Foundation of
China (11001161),(10901072) and (11101106).}

\renewcommand{\subjclassname}{%
  \textup{2000} Mathematics Subject Classification}
\subjclass[2000]{Primary 53B25; Secondary 53C40}
\date{}
\keywords{Almost-Hermitian manifold, canonical connection, holomorphic bisectional curvature}
\begin{abstract}
In this paper, by introducing a notion of local quasi holomorphic frame, we obtain a curvature formula for almost Hermitian manifolds which is similar to that of Hermitian manifolds. Moreover, as applications of the curvature formula, we extend a result of H.S. Wu and a result of F. Zheng to almost Hermitian manifolds.
\end{abstract}
\maketitle\markboth{Chengjie Yu}{curvature of almost hermitian manifold }
\section{Introduction}
A triple $(M,J,g)$ is called an almost Hermitian manifold where $M$ is a smooth manifold of even dimension, $J$ is an almost complex structure on $M$ and $g$ is a Riemannian metric on $M$ that is $J$-invariant. There are several kinds of interesting connections on almost Hermitian manifolds (see \cite{g}). Among them, the Levi-Civita connection which is torsion free and compatible with the metric and the canonical connection which is compatible with the metric and complex structure with vanishing $(1,1)$-part of the torsion attracted the most attentions.

The geometry of almost Hermitian manifolds with respect to the Levi-Civita connection was studied by Gray (See \cite{Gray1,Gray2,Gray3}) and the other geometers  in the 70's of the last century. An important conjecture  in this line was raised by Goldberg (\cite{Goldberg}): An Einstein almost most K\"ahler manifold must be K\"ahler. Here, almost K\"ahler manifolds means almost Hermitian manifolds with the fundamental $\omega_g(X,Y)=g(JX,Y)$ closed. The conjecture was proved by Sekigawa \cite{Sekigawa} with the further assumption of nonnegative scalar curvature. The full conjecture is still open. One can consult the survey \cite{AD} for recent progresses of the conjecture.

The canonical connection in crucial in the study of the structure of nearly K\"ahler manifolds by Nagy \cite{Na1,Na2}.  In \cite{twy}, Tossati, Weinkove and Yau used the canonical connection other than the Levi-Civita connection to solve the Calabi-Yau equation on almost K\"ahler manifolds related to an  interesting and important program on the study of simplectic topology proposed by Donaldson \cite{Donaldson}. Later, in \cite{vt}, Tossati obtained Laplacian comparison, a Schwartz lemma for almost Hermitian manifolds which is a generalization of Yau's Schwartz lemma for Hermitian manifolds(See \cite{Yau}). Moreover, with the help of the generalized Laplacian comparison and Schwartz lemma, Tossati extended a result by Seshadri-Zheng \cite {SZ} on the nonexistence of complete Hermitian metrics with holomorphic bisectional curvature bounded between two negative constants and bounded torsion on a product of complex manifolds to a product of almost complex manifolds with almost Hermitian metrics. In \cite{FTY}, Fan, Tam and the author further weaker the curvature assumption of the result of Tossati and obtain the same conclusion which is also a generalization of a result of Tam-Yu \cite{TY}.

The canonical connection was first introduced by Ehresmann and Libermann in \cite{e}. It is a natural generalization of the Chern connection on Hermitian manifolds (See \cite{Chern}) and is more related to the almost complex structure.
When the complex structure is integrable, it is just the Chern connection. In this paper, by introducing a notion of local quasi holomorphic frame, we obtain a curvature formula of almost Hermitian manifolds similar to that of Hermitian manifolds. More precisely, we have the following result.
\begin{thm}
Let $(M,g,J)$ be an almost Hermitian manifold. Let $(e_1,\cdots,e_n)$ be
a local quasi holomorphic  frame at $p$. Then
\begin{equation}
R_{i\bj k\bl}(p)=-\ol {e_l} e_k(g_{i\bj})(p)+g^{\bmu\lam}e_k(g_{i\bmu})\ol {e_l}(g_{\lam\bj})(p).
\end{equation}
\end{thm}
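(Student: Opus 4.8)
The plan is to run the classical Chern-connection computation in an arbitrary local $(1,0)$-frame $(e_1,\cdots,e_n)$, keeping track of the extra terms forced by the non-integrability of $J$, and then to evaluate at $p$, where the quasi holomorphic hypothesis makes exactly those extra terms vanish. Write $\nabla$ for the canonical connection; since $\nabla J=0$ it preserves $T^{1,0}M$, so I may define connection coefficients by $\nabla_{e_k}e_i=\Gamma_{ik}^\lam e_\lam$ and $\nabla_{\ol{e_k}}e_i=\Gamma_{i\bk}^\lam e_\lam$, and I extend $g$ complex bilinearly so that $g_{i\bj}=g(e_i,\ol{e_j})$ and $g(e_i,e_j)=g(\ol{e_i},\ol{e_j})=0$. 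There are two structural inputs. Metric compatibility $\nabla g=0$, applied along $e_k$, gives
\begin{equation}
e_k(g_{i\bj})=\Gamma_{ik}^\lam g_{\lam\bj}+\ol{\Gamma_{j\bk}^\lam}\,g_{i\bla}.
\end{equation}
The vanishing of the $(1,1)$-part of the torsion, i.e. $T(e_i,\ol{e_j})=\nabla_{e_i}\ol{e_j}-\nabla_{\ol{e_j}}e_i-[e_i,\ol{e_j}]=0$, has $T^{1,0}M$-component
\begin{equation}
[e_i,\ol{e_j}]^{1,0}=-\Gamma^\lam_{i\bj}e_\lam,
\end{equation}
so the mixed coefficients $\Gamma^\lam_{i\bj}$ are precisely the frame brackets. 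In the integrable case a holomorphic frame makes them vanish identically; here they will only be controlled at $p$.

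Next I expand $R(e_k,\ol{e_l})e_i=\nabla_{e_k}\nabla_{\ol{e_l}}e_i-\nabla_{\ol{e_l}}\nabla_{e_k}e_i-\nabla_{[e_k,\ol{e_l}]}e_i$ and pair with $\ol{e_j}$, so that $R_{i\bj k\bl}=g(R(e_k,\ol{e_l})e_i,\ol{e_j})$. Differentiating the defining relations yields, at a general point, a first term $e_k(\Gamma^\lam_{i\bl})e_\lam$, a second term $\ol{e_l}(\Gamma^\lam_{ik})e_\lam$, two quadratic $\Gamma\Gamma$-corrections, and the Lie-bracket term $\nabla_{[e_k,\ol{e_l}]}e_i$. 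Now I invoke the quasi holomorphic property at $p$: the mixed coefficients and their holomorphic derivatives vanish there, $\Gamma^\lam_{i\bj}(p)=0$ and $e_m\Gamma^\lam_{i\bj}(p)=0$, and by the torsion identity the bracket $[e_k,\ol{e_l}]$ itself vanishes at $p$. Consequently the first term drops out, each quadratic correction drops out because it carries a factor of a mixed coefficient $\Gamma^\lam_{i\bl}$ or $\Gamma^\nu_{\lam\bl}$, and $\nabla_{[e_k,\ol{e_l}]}e_i$ vanishes. This leaves $R_{i\bj k\bl}(p)=-\ol{e_l}(\Gamma^\lam_{ik})(p)\,g_{\lam\bj}(p)$.

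Finally I convert this into the metric expression by differentiating the first displayed identity along $\ol{e_l}$ and evaluating at $p$. The derivative of the second summand contributes $\ol{e_l}(\ol{\Gamma^\lam_{j\bk}})(p)=\overline{e_l(\Gamma^\lam_{j\bk})(p)}=0$ by the same quasi holomorphic property, while the zeroth-order part of the identity gives $\Gamma^\lam_{ik}(p)=g^{\bmu\lam}e_k(g_{i\bmu})(p)$. Substituting yields
\begin{equation}
\ol{e_l}(\Gamma^\lam_{ik})(p)\,g_{\lam\bj}(p)=\ol{e_l}e_k(g_{i\bj})(p)-g^{\bmu\lam}e_k(g_{i\bmu})\ol{e_l}(g_{\lam\bj})(p),
\end{equation}
and inserting the minus sign from the commutator gives the asserted formula.

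I expect the main obstacle to be the second step: isolating exactly which first derivatives of the frame brackets $[e_i,\ol{e_j}]$ must be annihilated and verifying that the quasi holomorphic frame supplies precisely these, while leaving the metric derivatives $e_k(g_{i\bmu})$ and $\ol{e_l}(g_{\lam\bj})$ untouched, since those must survive to assemble the quadratic term $g^{\bmu\lam}e_k(g_{i\bmu})\ol{e_l}(g_{\lam\bj})$. Pinning down the correct vanishing orders at $p$ is the substantive point; once they are fixed, the remaining work—the sign in the curvature convention and the disappearance of the $\Gamma\Gamma$-corrections—is routine bookkeeping.
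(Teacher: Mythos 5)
Your proposal is correct and follows essentially the same route as the paper: the three frame-level identities you isolate --- $\G_{i\bj}^\lam(p)=0$ and $\G_{ik}^\lam(p)=g^{\bmu\lam}e_k(g_{i\bmu})(p)$ (these are Lemmas \ref{lem-hol-con} and \ref{lem-christoffel}) together with $e_m(\G_{i\bj}^\lam)(p)=0$ (which, using $[\ol{e_j},e_i](p)=0$ so that the full bracket and not just its $(1,0)$-part vanishes at $p$, is exactly the coordinate form of Lemma \ref{lem-part-curv}, i.e.\ $\nabla_{e_m}\nabla_{\ol{e_j}}e_i(p)=0$) --- are precisely the inputs the paper feeds into the same three-term curvature expansion, with the bracket term and the quadratic corrections killed at $p$ in the same way. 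Your final step of differentiating the metric-compatibility identity along $\ol{e_l}$ and substituting $\G_{ik}^\lam(p)=g^{\bmu\lam}e_k(g_{i\bmu})(p)$ is just the index-notation version of the inner-product manipulations in the paper's proof of Theorem \ref{thm-curv}, so the two arguments coincide up to notation.
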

As an application of the curvature formula on almost Hermitian manifolds, we extend a result of Wu \cite{Wu} to almost Hermitian manifolds.
\begin{thm}
Let $(M,J)$ be an almost complex manifold. Let $g,h$ be two almost Hermitian metrics
on $M$. Then
 $$R^{g+h}(X,\ol X,Y,\ol Y)\leq R^g(X,\ol X,Y,\ol Y)+R^h(X,\ol X,Y,\ol Y)$$
for any two $(1,0)$ vectors $X$ and $Y$ where $R^{g+h},R^g$ and $R^h$ denote the curvature tensors of the metrics $g+h, g$ and $h$ respectively.
\end{thm}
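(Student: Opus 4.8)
The plan is to fix an arbitrary point $p\in M$ and apply the curvature formula of Theorem 1.1 to the three metrics $g$, $h$ and $g+h$ in one and the same frame. Since a local quasi holomorphic frame at $p$ is attached to $(J,p)$ alone, a single such frame $(e_1,\cdots,e_n)$ can serve all three metrics at once. Writing $g_{i\bj}=g(e_i,\ol{e_j})$, $h_{i\bj}=h(e_i,\ol{e_j})$ and $(g+h)_{i\bj}=g_{i\bj}+h_{i\bj}$, formula (1.1) splits each of $R^g_{i\bj k\bl}(p)$, $R^h_{i\bj k\bl}(p)$, $R^{g+h}_{i\bj k\bl}(p)$ into a Hessian term $-\ol{e_l}e_k(\cdot)_{i\bj}$ and a first order term $(\cdot)^{\bmu\lam}e_k(\cdot)_{i\bmu}\ol{e_l}(\cdot)_{\lam\bj}$.

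I would then contract each tensor with $X^i\ol X^j Y^k\ol Y^l$. The Hessian term is linear in the metric, so its contribution to $R^{g+h}(X,\ol X,Y,\ol Y)$ is exactly the sum of its contributions to $R^g$ and $R^h$ and cancels from the desired inequality. For the first order term, set $v^g_\mu=e_k(g_{i\bmu})(p)X^iY^k$ and $v^h_\mu=e_k(h_{i\bmu})(p)X^iY^k$. Using $g_{\lam\bj}=\ol{g_{j\bla}}$ one checks $\ol{e_l}(g_{\lam\bj})\ol X^j\ol Y^l=\ol{v^g_\lam}$, so the contracted first order term of $R^g$ is the nonnegative Hermitian form $g^{\bmu\lam}v^g_\mu\ol{v^g_\lam}$, which we write as $(v^g)^*A^{-1}v^g$ for the positive definite metric matrix $A=(g_{\lam\bmu})(p)$; likewise that of $R^h$ is $(v^h)^*B^{-1}v^h$ with $B=(h_{\lam\bmu})(p)$. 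Since $e_k$ is linear, $v^{g+h}_\mu=v^g_\mu+v^h_\mu$, and the first order term of $R^{g+h}$ equals $(v^g+v^h)^*(A+B)^{-1}(v^g+v^h)$.

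The statement now reduces to the matrix inequality
\begin{equation*}
(v^g+v^h)^*(A+B)^{-1}(v^g+v^h)\le (v^g)^*A^{-1}v^g+(v^h)^*B^{-1}v^h
\end{equation*}
for positive definite Hermitian $A,B$ and arbitrary $v^g,v^h$. This is precisely the joint convexity of the matrix-fractional map $(A,v)\mapsto v^*A^{-1}v$; alternatively, the Schur complement criterion shows that the block matrix $\begin{pmatrix}A & v^g\\ (v^g)^* & (v^g)^*A^{-1}v^g\end{pmatrix}$ and its analogue for $(B,v^h)$ are positive semidefinite, hence so is their sum, and the positivity of the Schur complement of $A+B$ in that sum is exactly the claimed bound. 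Because the first order term enters (1.1) with a plus sign, its subadditivity together with the additivity of the Hessian term yields $R^{g+h}(X,\ol X,Y,\ol Y)\le R^g(X,\ol X,Y,\ol Y)+R^h(X,\ol X,Y,\ol Y)$, as desired.

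The step I expect to be most delicate is the first one: making precise that the local quasi holomorphic frame can be chosen independently of the metric, so that the three instances of (1.1) are genuinely written in a common frame and the Hessian terms add exactly. Once this is secured, the identification of the first order term as a matrix-fractional form and the invocation of its joint convexity are routine.
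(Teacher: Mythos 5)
Your proof is correct, but it takes a genuinely different route from the paper. The paper does not work in a generic quasi holomorphic frame and cancel Hessian terms; instead it invokes Lemma \ref{lem-quasi-normal} to choose a local quasi holomorphic \emph{normal} frame at $p$ \emph{with respect to the sum metric} $g+h$, so that by Corollary \ref{cor-curv-normal} the first order term for $g+h$ vanishes identically and $R^{g+h}_{i\bj k\bl}(p)=-\ol{e_l}e_k((g+h)_{i\bj})(p)$. Expanding the right side via Theorem \ref{thm-curv} applied to $g$ and $h$ in that same frame then gives $R^{g+h}_{i\bj k\bl}=R^g_{i\bj k\bl}+R^h_{i\bj k\bl}-g^{\bmu\lam}e_k(g_{i\bmu})\ol{e_l}(g_{\lam\bj})-h^{\bmu\lam}e_k(h_{i\bmu})\ol{e_l}(h_{\lam\bj})$ at $p$, and the proof ends by simply discarding the two manifestly nonnegative Hermitian forms $g^{\bmu\lam}v^g_\mu\ol{v^g_\lam}$ and $h^{\bmu\lam}v^h_\mu\ol{v^h_\lam}$ --- no matrix inequality is needed. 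Your version replaces this frame normalization by the joint convexity of $(A,v)\mapsto v^*A^{-1}v$, proved via Schur complements, which is a correct and standard argument: the subadditivity $(v^g+v^h)^*(A+B)^{-1}(v^g+v^h)\le (v^g)^*A^{-1}v^g+(v^h)^*B^{-1}v^h$ together with linearity of the Hessian term gives exactly the claim. The delicate step you flag is indeed sound and is settled by Lemma \ref{lem-exist-quasi-holo}: the construction of a quasi holomorphic frame uses only the almost complex structure $J$, never a metric, so one frame serves $g$, $h$ and $g+h$ simultaneously; note the paper needs the mirror image of this same observation, namely that a frame normal for $g+h$ is still quasi holomorphic for $J$, so Theorem \ref{thm-curv} applies to $g$ and $h$ in it. What each approach buys: the paper's choice of frame reduces the estimate to dropping two visibly nonnegative terms, at the cost of an extra existence lemma for normal frames; yours works in an arbitrary quasi holomorphic frame and makes the structural mechanism explicit --- curvature splits into a metric-linear Hessian part plus a subadditive matrix-fractional part --- at the cost of importing the convexity lemma.
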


Another application of the curvature formula on almost Hermitian manifolds in paper is to give a classification of almost Hermitian metrics with nonpositive holomorphic bisectional curvature on a product of two compact almost complex manifolds which is a generalization of a result of Zheng \cite{Zheng} and a previous result of the author \cite{Yu}. More precisely, we obtain the following results.

\begin{thm}\label{thm-class-prod}
Let $M$ and $N$ be compact almost complex manifolds.
Let $\phi_1,\phi_2,\cdots, \phi_r$ be a basis of $H^{1,0}(M)$ and
$\psi_1,\psi_2,\cdots,\psi_s$ be a basis of $H^{1,0}(N)$. Then, for
any almost Hermitian metric $h$ on $M\times N$ with nonpositive holomorphic
bisectional curvature,
\begin{equation*}
\omega_h=\pi_1^{*}\omega_{h_1}+\pi_2^{*}\omega_{h_2}+\rho+\bar\rho
\end{equation*}
where $h_1$ and $h_2$ are almost Hermitian metrics on $M$ and  $N$ with
nonpositive holomorphic bisectional curvature respectively, $\pi_1$
and $\pi_2$ are natural projections from $M\times N$ to $M$ and from $M\times N$ to
$N$ respectively, and
\begin{equation*}
\rho=\sqrt{-1}\sum_{k=1}^{r}\sum_{l=1}^{s}a_{kl}\phi_k\wedge\ol{\psi_l}
\end{equation*}
with $a_{kl}$'s are complex numbers.
\end{thm}
\begin{cor}
Let $M$ and $N$ be two almost Hermitian manifolds with $\mathcal H(M)\neq\emptyset$ and $\mathcal H(N)\neq\emptyset$. Then
$$\mbox{codim}_\R(\mathcal{H}(M)\times
\mathcal{H}(N),\mathcal{H}(M\times N))=2\dim H^{1,0}(M)\cdot
\dim H^{1,0}(N).$$
\end{cor}
Here $H^{1,0}(M)$ means the space of holomorphic $(1,0)$-from on $M$ and $\mathcal{H}(M)$ is the collection of almost Hermitian metric on $M$ with nonpositive bisectional curvature. The corollary above implies that an almost Hermitian metric on a product of two compact almost complex manifolds with nonpositive holomorphic bisectional curvature must be a product metric if one of the compact almost complex manifold admits no nontrivial holomorphic $(1,0)$-form.

 Note that the local quasi holomorphic frame introduced in this paper is different with the the generalized normal holomorphic frame introduced by Vezzoni \cite{Vezzoni} since the background connection considered in \cite{Vezzoni} is the Levi-Civita connection and Vezzoni's generalized normal frame exists only on quasi K\"ahler manifolds. The introduced frame is also different with local holomorphic coordinate introduced in \cite{FTY}.
\section{Frames on almost complex manifolds}
Recall the following definition of holomorphic vector fields on almost complex manifolds which is a generalization of holomorphic vector fields on complex manifolds.
\begin{defn}[\cite{g}]
Let $(M,J)$ be an almost complex manifold, a $(1,0)$-vector field is said
to be pseudo holomorphic at the point $p\in M$ if $(L_{\ol X}Y)^{(1,0)}(p)=[\ol X,Y]^{(1,0)}(p)=0$  for any $(1,0)$ vector field $X$ on $M$, where $Z^{(1,0)}$ means the $(1,0)$ part of $Z$ and $L$ means Lie deriviative. If $Y$ is pseudo holomorphic all over $M$, we call $Y$ a holomorphic vector field.
\end{defn}
Note that the almost complex structure may not be integrable, so we can not expect an existence of local holomorphic vector fields in general. However, for any point $p\in M$, we can find a local $(1,0)$-frame that is pseudo holomorphic at $p$.

\begin{lem}\label{lem-exist-pseudo-holo}
Let $(M,J)$ be an almost complex manifold. Then, for any $p\in M$, there
is a $(1,0)$-frame $(e_1,e_2,\cdots,e_n)$ near $p$ such that $e_i$ is pseudo holomorphic at $p$ for all $i$. We call the $(1,0)$-frame $(e_1,e_2,\cdots,e_n)$ a local pseudo holomorphic frame at $p$.
\end{lem}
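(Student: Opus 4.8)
The plan is to obtain the desired frame by starting from an arbitrary local $(1,0)$-frame and adjusting only its first-order behaviour at $p$. Since we only need the pseudo holomorphy condition to hold at the single point $p$, no integrability of $J$ is required, and this is exactly what makes the construction succeed for a possibly non-integrable almost complex structure.

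First I would reduce the condition, which is stated for all $(1,0)$-fields $X$, to finitely many equations. Fix any local $(1,0)$-frame $(\epsilon_1,\dots,\epsilon_n)$ near $p$, with conjugate $(0,1)$-frame $(\ol{\epsilon_1},\dots,\ol{\epsilon_n})$. The key observation is that for a fixed $(1,0)$-field $Y$ the assignment $\ol X\mapsto [\ol X,Y]^{(1,0)}(p)$ is $C^\infty(M)$-linear in $\ol X$: indeed $[f\ol X,Y]=f[\ol X,Y]-Y(f)\ol X$, and since $\ol X$ is of type $(0,1)$ we have $(\ol X)^{(1,0)}=0$, whence $[f\ol X,Y]^{(1,0)}=f[\ol X,Y]^{(1,0)}$. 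Thus $[\ol X,Y]^{(1,0)}(p)$ depends only on $\ol X(p)$, and as $X$ runs over all $(1,0)$-fields $\ol X(p)$ runs over all $(0,1)$-vectors at $p$. Therefore $Y$ is pseudo holomorphic at $p$ if and only if $[\ol{\epsilon_j},Y]^{(1,0)}(p)=0$ for $j=1,\dots,n$.

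Next I would set up the correction. Introduce constants $c^k_{ji}$ by $[\ol{\epsilon_j},\epsilon_i]^{(1,0)}(p)=\sum_k c^k_{ji}\,\epsilon_k(p)$, and look for the new frame in the form $e_i=\sum_k a_i^k\epsilon_k$ with $a_i^k(p)=\delta_i^k$. Expanding, $[\ol{\epsilon_j},e_i]=\sum_k \ol{\epsilon_j}(a_i^k)\,\epsilon_k+\sum_k a_i^k[\ol{\epsilon_j},\epsilon_k]$, so taking the $(1,0)$-part at $p$ and using $a_i^k(p)=\delta_i^k$ gives $[\ol{\epsilon_j},e_i]^{(1,0)}(p)=\sum_k\big(\ol{\epsilon_j}(a_i^k)(p)+c^k_{ji}\big)\epsilon_k(p)$. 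Hence the frame is pseudo holomorphic at $p$ exactly when $\ol{\epsilon_j}(a_i^k)(p)=-c^k_{ji}$ for all $i,j,k$.

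Finally, this last condition is merely a prescription of the first-order jet of the functions $a_i^k$ at $p$, which is always solvable: every covector at $p$ is the differential of some smooth function, so I may choose smooth functions $u^j$ with $u^j(p)=0$, $\ol{\epsilon_l}(u^j)(p)=\delta^j_l$ and $\epsilon_l(u^j)(p)=0$, and set $a_i^k=\delta_i^k-\sum_j c^k_{ji}\,u^j$; this yields both $a_i^k(p)=\delta_i^k$ and $\ol{\epsilon_j}(a_i^k)(p)=-c^k_{ji}$. Since $(a_i^k(p))$ is the identity matrix, $(a_i^k)$ is invertible in a neighbourhood of $p$, so $(e_1,\dots,e_n)$ is a $(1,0)$-frame near $p$ and is pseudo holomorphic at $p$ by construction. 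The only genuinely substantive step is the tensoriality reduction in $\ol X$; after that the assertion is pure linear algebra at the level of $1$-jets, and the total absence of an integrability obstruction is precisely why one can guarantee the condition only at the single point $p$.
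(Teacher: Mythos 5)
Your proposal is correct and follows essentially the same route as the paper: correct an arbitrary local $(1,0)$-frame by coefficient functions whose value at $p$ is the identity matrix and whose antiholomorphic first derivatives at $p$ are prescribed to cancel the structure coefficients $c^k_{ji}$ of the brackets $[\ol{\epsilon_j},\epsilon_i]^{(1,0)}$. Your explicit verification that $\ol X\mapsto[\ol X,Y]^{(1,0)}(p)$ is tensorial in $\ol X$ (so that checking against the frame suffices) and your explicit jet-prescription via the functions $u^j$ are details the paper leaves implicit, and they are both correct.
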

\begin{proof}
Let $(v_1,v_2,\cdots,v_n)$ be a local $(1,0)$-frame of $M$ near $p$ and $(e_1,e_2,\cdots,e_n)$ be another local $(1,0)$-frame of $M$ near $p$. Suppose that \begin{equation}
e_i=f_{ij}v_j,
\end{equation}
and
\begin{equation}
[\ol {v_j},v_i]^{(1,0)}=c_{i\bj}^k v_k,
\end{equation}
where $f_{ij}$'s are local smooth functions to be determined. Then
\begin{equation}
\begin{split}
[\ol{v_j},e_i]^{(1,0)}(p)=\ol {v_{j}}(f_{i\mu})(p)v_\mu(p)+f_{i\lam}(p)c_{\lam\bj}^\mu(p) v_\mu(p).
\end{split}
\end{equation}
If we choose $f_{ij}$ such that  that $f_{ij}(p)=\delta_{ij}$ and $\ol {v_{k}}(f_{ij})(p)=-c_{i\bk}^j(p)$ for all $i,j$ and $k$, then
\begin{equation}
[\ol {v_j},e_i]^{(1,0)}(p)=0
\end{equation}
for all $i$ and $j$. So,  $(e_1,e_2,\cdots,e_n)$ is a local $(1,0)$-frame on $M$ near $p$ such that $e_i$ is pseudo holomorphic at $p$ for all $i$.
\end{proof}
Due to the nonexistence of local holomorphic frame on an almost complex manifold in general, we have to introduce a notion of holomorphicity at a point that is better than pseudo holomorphic for further application.
\begin{defn}
Let $(M,J)$ be an almost complex manifold. A $(1,0)$-vector field $X$ is called quasi holomorphic at the point $p\in M$ if $[Z,[\ol Y,X]]^{(1,0)}(p)=0$ for any $(1,0)$-vector fields $Y$ and $Z$ that are pseudo holomorphic at $p$, and moreover, $X$ itself is also pseudo holomorphic at $p$.
\end{defn}
From the definition, it is easy to check that  a holomorphic vector field is quasi holomorphic all over $M$.
\begin{lem}\label{lem-criterion-quasi-holo}
Let $(M,J)$ be an almost complex manifold and $(e_1,e_2,\cdots,e_n)$ be a local pseudo holomorphic frame at $p\in M$. Let $X$ be a $(1,0)$ vector field on $M$ that is pseudo holomorphic at $p$. Then, $X$ is quasi holomorphic at $p$ if and only if $[e_i,[\ol{e_j},X]]^{(1,0)}(p)=0$ for all $i$ and $j$.
\end{lem}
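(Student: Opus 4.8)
The ``only if'' direction is immediate: since the frame vectors $e_i$ and $e_j$ are pseudo holomorphic at $p$, applying the definition of quasi holomorphicity to the special choice $Y=e_j$, $Z=e_i$ already gives $[e_i,[\ol{e_j},X]]^{(1,0)}(p)=0$. The content of the lemma is the converse, so the plan is to assume $[e_i,[\ol{e_j},X]]^{(1,0)}(p)=0$ for all $i,j$ and to deduce $[Z,[\ol Y,X]]^{(1,0)}(p)=0$ for \emph{arbitrary} $(1,0)$ vector fields $Y,Z$ that are pseudo holomorphic at $p$. Writing these in the frame as $Y=\sum_j a^j e_j$ and $Z=\sum_i b^i e_i$ with smooth coefficients $a^j,b^i$, I would expand $[Z,[\ol Y,X]]$ by the Leibniz rule $[fU,gV]=fg[U,V]+fU(g)V-gV(f)U$, extract the $(1,0)$ part at $p$, and account for every resulting term.

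Before the expansion I would record one derivative identity that does most of the work. Computing $[\ol{e_k},Y]^{(1,0)}(p)$ from $Y=\sum_j a^j e_j$ and using that each $e_j$ is pseudo holomorphic at $p$, so that $[\ol{e_k},e_j]^{(1,0)}(p)=0$, the only surviving contribution is $\sum_j \ol{e_k}(a^j)(p)\,e_j(p)$. Because $Y$ is pseudo holomorphic at $p$ this must vanish, and linear independence of the $e_j(p)$ forces $\ol{e_k}(a^j)(p)=0$ for all $j,k$; the identical argument applied to $Z$ gives $\ol{e_k}(b^i)(p)=0$ for all $i,k$. In other words, pseudo holomorphicity at $p$, read in a pseudo holomorphic frame, kills the $\ol{e_k}$-derivatives of the coefficient functions at $p$.

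Expanding the double bracket then produces six types of terms, which I expect to be disposed of as follows. The term carrying $[e_i,[\ol{e_j},X]]$ dies by the standing hypothesis; the term carrying $[\ol{e_j},X]^{(1,0)}$ dies because $X$ is pseudo holomorphic at $p$; the term carrying $[e_i,\ol{e_j}]^{(1,0)}(p)=-[\ol{e_j},e_i]^{(1,0)}(p)$ dies because the frame is pseudo holomorphic at $p$; and the two purely $(0,1)$ contributions are annihilated when the $(1,0)$ part is taken. The two genuinely delicate terms are those in which $[\ol{e_j},X]$ or $\ol{e_j}$ differentiates the coefficient $b^i$ of $Z$: here one must observe that, modulo $[\ol{e_j},X]^{(1,0)}(p)=0$, only the $(0,1)$ part of $[\ol{e_j},X]$ acts at $p$, so both terms reduce to linear combinations of the quantities $\ol{e_k}(b^i)(p)$, which vanish by the derivative identity above. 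This yields $[Z,[\ol Y,X]]^{(1,0)}(p)=0$, i.e.\ $X$ is quasi holomorphic at $p$. The main obstacle is precisely the bookkeeping of these coefficient-derivative terms: unlike in the frame case they are not manifestly zero, and controlling them is exactly where the hypothesis that $Y$ and $Z$ (not merely $X$) are pseudo holomorphic at $p$ is indispensable.
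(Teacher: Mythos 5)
Your proposal is correct and follows essentially the same route as the paper: expand $Y$ and $Z$ in the pseudo holomorphic frame, observe that pseudo holomorphicity of $Y$ and $Z$ at $p$ kills the $\ol{e_k}$-derivatives of their coefficient functions there, and then dispose of each Leibniz term exactly as you describe, with the standing hypothesis handling the main double bracket and the fact that $[\ol{Y},X](p)$ is purely $(0,1)$ handling the coefficient-derivative terms. The only cosmetic difference is that the paper pulls the coefficients of $Z$ out first, so your two ``delicate'' terms are absorbed into the single quantity $([\ol{Y},X](Z_i))(p)\,e_i(p)$, which vanishes for precisely the reason you give.
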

\begin{proof}
Let $Y=Y_i e_i$ and $Z=Z_i e_i$ be two $(1,0)$-vector fields that are pseudo holomorphic at $p$. Then, it is clear that $\ol{v}(Y_i)(p)=\ol{v}(Z_i)(p)=0$ for any $(1,0)$ vector $v$ at $p$. Then
\begin{equation}
\begin{split}
&[Z,[\ol{Y},X]]^{(1,0)}(p)\\
=&[Z_ie_i,[\ol{Y},X]]^{(1,0)}(p)\\
=&Z_i[e_i,[\ol{Y},X]]^{(1,0)}(p)-([\ol Y,X](Z_i))(p)e_i(p)\\
=&Z_i[e_i,[\ol{Y_je_j},X]]^{(1,0)}(p)\\
=&Z_i[e_i,\ol{Y_j}[\ol{e_j},X]]^{(1,0)}(p)-Z_i[e_i,X(\ol{Y_j})\ol{e_j}]^{(1,0)}(p)\\
=&Z_i\ol{Y_j}[e_i,[\ol{e_j},X]]^{(1,0)}(p)+Z_ie_i(\ol{Y_j})[\ol{e_j},X]^{(1,0)}(p)\\
=&Z_i\ol{Y_j}[e_i,[\ol{e_j},X]]^{(1,0)}(p)\\
=&0.
\end{split}
\end{equation}
\end{proof}
\begin{lem}\label{lem-exist-quasi-holo}
Let $(M,J)$ be an almost complex manifold. Then, for any $p\in M$, there
is a $(1,0)$-frame $(e_1,e_2,\cdots,e_n)$ near $p$ such that $e_i$ is quasi holomorphic at $p$ for all $i$. We call the $(1,0)$-frame $(e_1,e_2,\cdots,e_n)$ a local quasi holomorphic frame at $p$.
\end{lem}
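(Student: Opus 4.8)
The plan is to start from a local pseudo holomorphic frame $(v_1,\dots,v_n)$ at $p$, whose existence is guaranteed by Lemma \ref{lem-exist-pseudo-holo}, and to look for the desired frame in the form $e_l=f_{lm}v_m$, where the $f_{lm}$ are local smooth functions whose $2$-jet at $p$ I am free to prescribe (for instance by taking the $f_{lm}$ to be quadratic polynomials in a fixed local coordinate system). Since $(v_1,\dots,v_n)$ is itself pseudo holomorphic at $p$, Lemma \ref{lem-criterion-quasi-holo} applies with this frame: once each $e_l$ has been made pseudo holomorphic at $p$, the field $e_l$ will be quasi holomorphic at $p$ if and only if $[v_i,[\ol{v_j},e_l]]^{(1,0)}(p)=0$ for all $i$ and $j$. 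Thus the whole problem reduces to solving these finitely many pointwise equations by a suitable choice of the jet of $f$.

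First I would impose the first-order conditions $f_{lm}(p)=\de_{lm}$ and $\ol{v_j}(f_{lk})(p)=-c_{l\bj}^k(p)$, exactly as in the proof of Lemma \ref{lem-exist-pseudo-holo}; this makes each $e_l$ pseudo holomorphic at $p$. Writing $[\ol{v_j},e_l]=A_{l\bj}^k v_k+B_{l\bj}^{\bk}\ol{v_k}$, with $A_{l\bj}^k=\ol{v_j}(f_{lk})+f_{lm}c_{m\bj}^k$ and $B_{l\bj}^{\bk}=f_{lm}d_{m\bj}^{\bk}$ (here $d_{m\bj}^{\bk}\ol{v_k}=[\ol{v_j},v_m]^{(0,1)}$), the first-order conditions give precisely $A_{l\bj}^k(p)=0$. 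Expanding the outer bracket and discarding the term $A_{l\bj}^k[v_i,v_k]^{(1,0)}(p)$ because $A_{l\bj}^k(p)=0$, one finds
\[
[v_i,[\ol{v_j},e_l]]^{(1,0)}(p)=v_i(A_{l\bj}^k)(p)\,v_k(p)+d_{l\bj}^{\bk}(p)\,[v_i,\ol{v_k}]^{(1,0)}(p),
\]
in which the second summand is a fixed quantity built only from the structure functions of $(v_i)$ at $p$. Since $v_i(A_{l\bj}^k)(p)=v_i\ol{v_j}(f_{lk})(p)+v_i(f_{lm})(p)c_{m\bj}^k(p)+v_i(c_{l\bj}^k)(p)$, the obstruction depends linearly on the mixed second derivatives $v_i\ol{v_j}(f_{lk})(p)$, all remaining ingredients being data already fixed.

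The key observation is that these mixed second derivatives can be prescribed freely and independently of the $0$-jet and $1$-jet already imposed. Indeed, up to terms determined by the (already fixed) first derivatives of $f_{lk}$, the number $v_i\ol{v_j}(f_{lk})(p)$ is the value $H(v_i,\ol{v_j})$ of the complex Hessian $H$ of $f_{lk}$ at $p$; in the basis $\{v_1,\dots,v_n,\ol{v_1},\dots,\ol{v_n}\}$ of the complexified tangent space the Hessian of a complex-valued function is an arbitrary symmetric bilinear form, whose mixed block $\big(H(v_i,\ol{v_j})\big)$ is an unconstrained $n\times n$ complex matrix. Hence, after fixing $f_{lm}(p)=\de_{lm}$, the conditions $\ol{v_j}(f_{lk})(p)=-c_{l\bj}^k(p)$, and say $v_i(f_{lm})(p)=0$, the residual freedom in the Hessian lets me solve, for every $l$, the linear system $v_i\ol{v_j}(f_{lk})(p)=\text{(fixed data)}$ making $[v_i,[\ol{v_j},e_l]]^{(1,0)}(p)=0$ for all $i,j,k$. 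A germ $f_{lm}$ realizing this $2$-jet exists and is invertible near $p$ since $f(p)=\id$, so $(e_1,\dots,e_n)=(f_{lm}v_m)$ is a frame in which every $e_l$ is quasi holomorphic at $p$.

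The main obstacle is the second-order bracket expansion together with the bookkeeping that isolates $v_i\ol{v_j}(f_{lk})(p)$ as the only free parameters entering the obstruction; in particular one must verify that the $(0,1)$-conditions already forced by pseudo holomorphicity leave the mixed Hessian components unconstrained. Once this linear-algebra point—that the mixed block of a complex Hessian is free—is in place, the existence of the local quasi holomorphic frame follows at once.
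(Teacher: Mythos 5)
Your proposal is correct and takes essentially the same route as the paper: the ansatz $e_l=f_{lm}v_m$ over a pseudo holomorphic frame, the first-order conditions making each $e_l$ pseudo holomorphic at $p$, and the elimination of the obstruction $[v_i,[\ol{v_j},e_l]]^{(1,0)}(p)$ by prescribing the mixed second derivatives $v_i\ol{v_j}(f_{lk})(p)$, with your explicit justification that the mixed Hessian block of a complex-valued function is freely prescribable being a point the paper leaves implicit. One minor simplification you missed: since the $v$-frame is pseudo holomorphic at $p$, $[v_i,\ol{v_k}]^{(1,0)}(p)=-[\ol{v_k},v_i]^{(1,0)}(p)=0$, so your ``fixed'' second summand $d_{l\bj}^{\bk}(p)[v_i,\ol{v_k}]^{(1,0)}(p)$ actually vanishes---which is how the paper arrives at the clean prescription $v_i\ol{v_j}(f_{lk})(p)=-v_i(c_{l\bj}^k)(p)$---though carrying it along as a constant term in your linear system is harmless.
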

\begin{proof}
Let $(v_1,v_2,\cdots,v_n)$ be a local pseudo holomorphic frame of $M$ at $p$ and $(e_1,e_2,\cdots,e_n)$ be a local $(1,0)$-frame of $M$ near $p$. Suppose that \begin{equation}
e_i=f_{ij}v_j,
\end{equation}
where $f_{ij}$'s are local smooth functions to be determined. We first assume that  $\ol{v_k}(f_{ij})(p)=0$ for all $i,j$ and $k$. Then, by the proof of Lemma \ref{lem-exist-pseudo-holo}, $(e_1,e_2,\cdots,e_n)$ is a local pseudo holomorphic frame at $p$.

Moreover, suppose that
\begin{equation}
[\ol {v_j},v_i]^{(1,0)}=c_{i\bj}^k v_k
\end{equation}
It is clear that $c_{i\bj}^k(p)=0$ for all $i,j$ and $k$ since $(v_1,v_2,\cdots,v_n)$ is a local pseudo holomorphic frame at $p$. Then
\begin{equation}
\begin{split}
&[v_k,[\ol{v_j},e_i]]^{(1,0)}(p)\\
=&[v_k,[\ol{v_j},f_{i\lam}v_\lam]]^{(1,0)}(p)\\
=&[v_k,f_{i\lam}[\ol{v_j},v_\lam]]^{(1,0)}(p)+[v_k,\ol{v_j}(f_{i\lam})v_\lam]^{(1,0)}(p)\\
=&f_{i\lam}[v_k,[\ol{v_j},v_\lam]]^{(1,0)}(p)+v_k(f_{i\lam})[\ol{v_j},v_\lam]^{(1,0)}(p)+\ol{v_j}(f_{i\lam})(p)[v_k,v_\lam]^{(1,0)}(p)\\
&+v_k\ol{v_j}(f_{i\lam})v_\lam(p)\\
=&f_{i\lam}[v_k,[\ol{v_j},v_\lam]]^{(1,0)}(p)+v_k\ol{v_j}(f_{i\lam})v_\lam(p)\\
=&f_{i\lam}[v_k,[\ol{v_j},v_\lam]^{(1,0)}]^{(1,0)}(p)+v_k\ol{v_j}(f_{i\lam})v_\lam(p)\\
=&f_{i\lam}[v_k,c_{\lam\bj}^\mu v_\mu]^{(1,0)}(p)+v_k\ol{v_j}(f_{i\lam})v_\lam(p)\\
=&f_{i\lam}c_{\lam\bj}^\mu[v_k,v_\lam]^{(1,0)}(p)+f_{i\lam}v_k(c_{\lam\bj}^\mu)v_\mu(p)+v_k\ol{v_j}(f_{i\lam})v_\lam(p)\\
=&(f_{i\lam}v_k(c_{\lam\bj}^\mu)+v_k\ol{v_j}(f_{i\mu}))v_\mu(p)\\
\end{split}
\end{equation}
If we further choose $f_{ij}$ such that $f_{ij}(p)=\delta_{ij}$ and
\begin{equation}
v_l\ol{v_k}(f_{ij})(p)=-v_{l}(c_{i\bk}^j)(p),
\end{equation}
for all $i,j,k$ and $l$, then
\begin{equation}
[v_k,[\ol{v_j},e_i]]^{(1,0)}(p)=0
\end{equation}
for all $i,j$ and $k$. By Lemma \ref{lem-criterion-quasi-holo}, we know that $e_i$ is quasi holomorphic at $p$ for all $i$. This completes the proof.
\end{proof}

\section{Frames on almost Hermitian manifolds}
In this section, we recall some basic definitions for almost Hermitian manifolds and introduce a notion for almost Hermitian manifolds that is analogous to normal frame on Hermitian manifolds

\begin{defn}[\cite{k,k2,g}] Let $(M,J)$ be an almost complex manifold. A Riemannian metric $g$ on $M$ such that $g(JX,JY)=g(X,Y)$ for any two tangent vectors $X$ and $Y$ is called an almost Hermitian metric.  The triple $(M,J,g)$ is called an almost Hermitian manifold. The two form $\omega_g=g(JX,Y)$ is called the fundamental form of the almost Hermitian manifold. A connection $\nabla$ on an almost Hermitian manifold $(M,J,g)$ such that $\nabla g=0$ and $\nabla J=0$ is called an almost Hermitian connection.
\end{defn}

For a connection $\nabla$ on a manifold $M$, recall that the torsion $\tau$ of the connection is a vector-valued two form defined as
\begin{equation}
\tau(X,Y)=\nabla_XY-\nabla_YX-[X,Y].
\end{equation}
On an almost Hermitian manifold, there are many almost Hermitian connections. However, there is a unique one such that $\tau(X,\overline Y)=0$ for any two $(1,0)$-vectors $X$ and $Y$. Such a notion is first introduced by Ehresman and Libermann \cite{e}.
\begin{defn}[\cite{k,k2}]The unique almost Hermitian connection $\nabla$ on an almost Hermitian manifold $(M,J,g)$ with vanishing $(1,1)$-part of the torsion is called the canonical connection of the almost Hermitian manifold.
\end{defn}

In this paper, for an almost Hermitian metric, the connection is always chosen to be the canonical connection $\nabla$. Recall that the curvature tensor $R$ of the connection $\nabla$ is defined as
\begin{equation}
R(X,Y,Z,W)=\vv<\nabla_Z\nabla_WX-\nabla_W\nabla_ZX-\nabla_{[Z,W]}X,Y>
\end{equation}
for any tangent vectors $X,Y,Z$ and $W$. Note that unlike the curvature tensor on Hermitian manifolds with Chern connection, the curvature tensor $R$ of the canonical connection may has non-vanishing (2,0)-part which mean that $R(X,\ol{Y}, Z,W)$ may not vanish for any $(1,0)$-vectors $X,Y,Z,W$. For the $(1,1)$-part of the curvature tensor $R$, we means $R(X,\ol Y,Z,\ol W)$ for $(1,0)$-vectors $X,Y,Z$ and $W$. Moreover, $R$ is said to be of nonpositive (negative) holomorphic bisectional curvature if $R(X,\ol X,Y,\ol Y)\leq 0(<0)$ for any two nonzero $(1,0)$-vectors $X$ and $Y$.

This notion of holomorphic vector fields introduced in the last section is somehow compatible with the canonical connection on almost Hermitian manifolds analogous to that on Hermitian manifolds.
\begin{lem}\label{lem-hol-con}
 $\nabla_{\ol X}Y=[\ol X,Y]^{(1,0)}$ for any two $(1,0)$-vector fields $X$ and $Y$ on an almost Hermitian manifold.
\end{lem}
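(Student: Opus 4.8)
The plan is to verify the identity $\nabla_{\ol X}Y=[\ol X,Y]^{(1,0)}$ by exploiting the two defining properties of the canonical connection: compatibility with the complex structure (so the connection preserves $(1,0)$- and $(0,1)$-bundles, forcing $\nabla_{\ol X}Y$ to be of type $(1,0)$ whenever $Y$ is), and the vanishing of the $(1,1)$-part of the torsion. First I would record that since $\nabla J=0$, the connection $\nabla$ respects the type decomposition $TM\otimes\C=T^{1,0}\oplus T^{0,1}$; in particular $\nabla_Z Y$ is a $(1,0)$-vector field whenever $Y$ is $(1,0)$, for any (real or complexified) direction $Z$. Hence both sides of the claimed equation are manifestly $(1,0)$-vector fields, and it suffices to identify them.

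The key step is to feed the two $(1,0)$-fields $\ol X$—here really $X$ paired against $Y$—into the torsion identity. The definition of torsion gives
\begin{equation}
\tau(\ol X,Y)=\nabla_{\ol X}Y-\nabla_Y\ol X-[\ol X,Y].
\end{equation}
The canonical connection has vanishing $(1,1)$-part of the torsion, which is exactly the statement that $\tau(\ol X,Y)=0$ for the mixed pair consisting of a $(0,1)$-field $\ol X$ and a $(1,0)$-field $Y$. Therefore
\begin{equation}
\nabla_{\ol X}Y=\nabla_Y\ol X+[\ol X,Y].
\end{equation}
Now I would take the $(1,0)$-part of both sides. On the left, $\nabla_{\ol X}Y$ is already $(1,0)$ by the first step, so it equals its own $(1,0)$-part. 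On the right, $\nabla_Y\ol X$ is a $(0,1)$-field (again by $\nabla J=0$, since $\ol X$ is $(0,1)$), so its $(1,0)$-part vanishes, and $[\ol X,Y]^{(1,0)}$ survives. This yields $\nabla_{\ol X}Y=[\ol X,Y]^{(1,0)}$ exactly.

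The main subtlety is bookkeeping with the torsion convention rather than any deep obstacle: one must be careful that the ``$(1,1)$-part of the torsion'' in the definition of the canonical connection is precisely the component $\tau(\ol X,Y)$ (equivalently $\tau^{(1,1)}(X,\ol Y)=0$ for $(1,0)$-vectors), since the torsion is a vector-valued two-form and its bidegree decomposition must be matched against the convention stated earlier in the paper. Once that identification is clear, the argument is a two-line consequence of $\nabla J=0$ and the torsion condition; no local frame or curvature computation is needed, and the identity holds globally for all $(1,0)$-fields $X,Y$.
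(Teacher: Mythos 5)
Your proof is correct and follows essentially the same route as the paper: apply the torsion identity $\nabla_{\ol X}Y=\nabla_Y\ol X+[\ol X,Y]+\tau(\ol X,Y)$, kill $\tau(\ol X,Y)$ using the vanishing $(1,1)$-part of the torsion, and then use $\nabla J=0$ to see that $\nabla_{\ol X}Y$ is of type $(1,0)$ while $\nabla_Y\ol X$ is of type $(0,1)$, so projecting onto the $(1,0)$-part yields the claim. Your extra remark matching $\tau(\ol X,Y)=0$ against the paper's convention $\tau(X,\ol Y)=0$ (via antisymmetry of the torsion two-form) is a harmless and sensible bit of bookkeeping that the paper leaves implicit.
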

\begin{proof}
By the definition of canonical connection and torsion, we have
\begin{equation}
\nabla_{\ol X}Y=\nabla_Y\ol X+[\ol X,Y]+\tau(\ol X,Y)=\nabla_Y{\ol X}+[\ol X,Y].
\end{equation}
Since $\nabla J=0$, we know that $\nabla_{\ol X}Y$ is a $(1,0)$-vector and $\nabla_Y\ol X$ is a $(0,1)$-vector. Therefore, the conclusion follows.

\end{proof}

A local pseudo holomorphic frame for an almost Hermitian manifold play a similar role as a local holomorphic frame for an Hermitian manifold. More precisely, we have the following formula for the Christoffel symbol of the canonical connection under a local pseudo holomorphic frame.

\begin{lem}\label{lem-christoffel}
Let $(M,J,g)$ be an almost Hermitian manifold and \\
$(e_1,e_2,\cdots,e_n)$ be a local pseudo holomorohic
frame at $p$. Then $\G_{i\bj}^k(p)=0$ and $\G_{ij}^k(p)=g^{\bmu k}e_j(g_{i\bmu})(p)$ for any $i,j$ and $k$.
\end{lem}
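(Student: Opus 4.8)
The plan is to read off both identities directly from metric compatibility $\nabla g=0$ together with Lemma \ref{lem-hol-con}, with essentially no computation beyond one application of the conjugation identity for $\nabla$. Throughout I use the conventions $\nabla_{e_j}e_i=\G_{ij}^k e_k$ and $\nabla_{\ol{e_j}}e_i=\G_{i\bj}^k e_k$, and set $g_{i\bj}=\vv<e_i,\ol{e_j}>$, the metric being extended complex bilinearly so that $\vv<e_i,e_j>=0$.

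First I would dispose of $\G_{i\bj}^k(p)=0$, which is immediate. By Lemma \ref{lem-hol-con}, $\nabla_{\ol{e_j}}e_i=[\ol{e_j},e_i]^{(1,0)}$, and since $(e_1,\cdots,e_n)$ is pseudo holomorphic at $p$ the right-hand side vanishes at $p$ by the very definition of pseudo holomorphicity. Hence $\G_{i\bj}^k(p)e_k(p)=(\nabla_{\ol{e_j}}e_i)(p)=0$, so $\G_{i\bj}^k(p)=0$ for all $i,j,k$.

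For the second identity I would differentiate $g_{i\bmu}=\vv<e_i,\ol{e_\mu}>$ along $e_j$ and apply $\nabla g=0$:
\begin{equation*}
e_j(g_{i\bmu})=\vv<\nabla_{e_j}e_i,\ol{e_\mu}>+\vv<e_i,\nabla_{e_j}\ol{e_\mu}>.
\end{equation*}
The first term on the right is $\G_{ij}^\lam g_{\lam\bmu}$. The point that requires care is that the second term vanishes \emph{at} $p$ (it is nonzero in general, since $\nabla_{e_j}\ol{e_\mu}$ is a $(0,1)$-vector and pairs nontrivially with the $(1,0)$-vector $e_i$). Here I would use that the canonical connection is a real connection, so $\nabla_{e_j}\ol{e_\mu}=\ol{\nabla_{\ol{e_j}}e_\mu}=\ol{[\ol{e_j},e_\mu]^{(1,0)}}$ by Lemma \ref{lem-hol-con}, and this is zero at $p$ again by pseudo holomorphicity. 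Evaluating at $p$ therefore gives $e_j(g_{i\bmu})(p)=\G_{ij}^\lam(p)\,g_{\lam\bmu}(p)$, and contracting with the inverse metric $g^{\bmu k}$ (so that $g_{\lam\bmu}g^{\bmu k}=\delta_\lam^k$) yields $\G_{ij}^k(p)=g^{\bmu k}e_j(g_{i\bmu})(p)$, as claimed.

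The only genuine obstacle is the vanishing of $\vv<e_i,\nabla_{e_j}\ol{e_\mu}>(p)$; this is exactly where the reality of $\nabla$ and the conjugate form of Lemma \ref{lem-hol-con} are used, and it is the reason a pseudo holomorphic frame rather than an arbitrary $(1,0)$-frame is needed. Everything else is routine bookkeeping with the metric and its inverse.
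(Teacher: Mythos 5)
Your proof is correct and takes essentially the same route as the paper's: both parts rest on Lemma \ref{lem-hol-con} together with $\nabla g=0$, reading off $\G_{i\bj}^k(p)=0$ from pseudo holomorphicity at $p$ and obtaining the second identity by differentiating $g_{i\bmu}=\vv<e_i,\ol{e_\mu}>$ along $e_j$. The only difference is that you make explicit the vanishing of $\vv<e_i,\nabla_{e_j}\ol{e_\mu}>(p)$ via the reality of the canonical connection and the conjugate of Lemma \ref{lem-hol-con} --- a step the paper's computation uses but leaves implicit.
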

\begin{proof}
By Lemma \ref{lem-hol-con}, we know that $\nabla_{\ol e_j}e_i(p)=0$. So $\G_{i\bj}^k(p)=0$ for all $i,j$ and $k$. Moreover,
\begin{equation}
\begin{split}
&e_{j}(g_{i\bmu})(p)\\
=&\vv<\nabla_{e_j}e_i,\ol{e_\mu}>(p)+\vv<e_i,\nabla_{e_j}\ol{e_\mu}(p)>\\
=&\G_{ij}^k(p)g_{k\bmu}(p).
\end{split}
\end{equation}
Hence $\G_{ij}^k(p)=g^{\bmu k}e_j(g_{i\bmu})(p)$ for any $i,j$ and $k$.
\end{proof}

Similarly as on Hermitian manifolds, we can have a similar notion of normal holomorphic frame.
\begin{lem}\label{lem-pseudo-normal}
Let $(M,J,g)$ be an almost Hermitian manifold. Then, there is a local pseudo holomorphic frame $(e_1,e_2,\cdots,e_n)$ at $p$,
such that $\nabla e_i(p)=0$, or equivalently, $d g_{i\bj}(p)=0$ for all $i$ and $j$. We call the frame a local pseudo holomorphic normal frame at $p$.
\end{lem}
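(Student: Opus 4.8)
The plan is to start from any local pseudo holomorphic frame $(e_1,\cdots,e_n)$ at $p$, which exists by Lemma \ref{lem-exist-pseudo-holo}, and to correct it by a pointwise-normalized change of frame that kills the remaining first derivatives of the metric. The key observation is that for a pseudo holomorphic frame Lemma \ref{lem-christoffel} already yields $\G_{i\bj}^k(p)=0$, equivalently $\nabla_{\ol{e_j}}e_i(p)=0$; hence the only obstruction to $\nabla e_i(p)=0$ is the $(1,0)$-part $\nabla_{e_j}e_i(p)=\G_{ij}^k(p)e_k(p)$, with $\G_{ij}^k(p)=g^{\bmu k}e_j(g_{i\bmu})(p)$. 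So I only need to arrange $\G_{ij}^k(p)=0$ in the new frame.

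First I would set $\tilde e_i=a_{ij}e_j$ for complex-valued functions $a_{ij}$ near $p$ to be determined, and compute, using $a_{ij}(p)=\delta_{ij}$ and the pseudo holomorphicity of $(e_i)$,
\begin{equation}
\nabla_{e_l}\tilde e_i(p)=\lf(e_l(a_{ik})(p)+\G_{il}^k(p)\ri)e_k(p),\qquad \nabla_{\ol{e_l}}\tilde e_i(p)=\ol{e_l}(a_{ij})(p)e_j(p).
\end{equation}
I would then impose the three first-jet conditions $a_{ij}(p)=\delta_{ij}$, $\ol{e_l}(a_{ij})(p)=0$ and $e_l(a_{ik})(p)=-\G_{il}^k(p)$ for all indices. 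Since $e_1(p),\cdots,e_n(p),\ol{e_1}(p),\cdots,\ol{e_n}(p)$ is a basis of the complexified tangent space $T_p^{\C}M$, prescribing the value and all these directional derivatives of a complex-valued function at the single point $p$ is unconstrained, so such $a_{ij}$ exist (for instance affine functions in a local real coordinate chart).

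It then remains to verify the two claims. The condition $\ol{e_l}(a_{ij})(p)=0$ makes the new frame pseudo holomorphic at $p$: by Lemma \ref{lem-hol-con} one has $\nabla_{\ol{e_l}}\tilde e_i=[\ol{e_l},\tilde e_i]^{(1,0)}$, so the second identity above gives $[\ol{e_l},\tilde e_i]^{(1,0)}(p)=0$. Together with $e_l(a_{ik})(p)=-\G_{il}^k(p)$, the first identity gives $\nabla_{e_l}\tilde e_i(p)=0$, whence $\nabla\tilde e_i(p)=0$. Finally, for the equivalence with $dg_{i\bj}(p)=0$ I would use $\nabla g=0$ to write $X(g_{i\bj})=\vv<\nabla_X\tilde e_i,\ol{\tilde e_j}>+\vv<\tilde e_i,\nabla_X\ol{\tilde e_j}>$; since $\nabla$ is real, $\nabla\tilde e_i(p)=0$ forces $\nabla\ol{\tilde e_j}(p)=0$ as well, so $dg_{i\bj}(p)=0$. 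Conversely, for a pseudo holomorphic frame each of $e_l(g_{i\bj})(p)=\G_{il}^k(p)g_{k\bj}(p)$ and $\ol{e_l}(g_{i\bj})(p)=\ol{\G_{jl}^k(p)}g_{i\bk}(p)$ reduces, via $\nabla_{\ol{e_l}}\tilde e_i(p)=0$, to a nondegenerate contraction of $\G_{ij}^k(p)$ with the metric, so their vanishing is equivalent to $\nabla\tilde e_i(p)=0$.

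I expect the only delicate point to be bookkeeping rather than any genuine difficulty: one must separate the $(1,0)$- and $(0,1)$-directional derivatives of $a_{ij}$ and check that the antiholomorphic condition $\ol{e_l}(a_{ij})(p)=0$ is exactly what preserves pseudo holomorphicity, while leaving the holomorphic derivatives free to absorb $\G_{ij}^k(p)$. Because a complex-valued function carries no reality constraint on its first jet, these two requirements do not interfere, which is what makes the construction go through.
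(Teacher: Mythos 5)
Your proposal is correct and follows essentially the same route as the paper: starting from a pseudo holomorphic frame (Lemma \ref{lem-exist-pseudo-holo}), you multiply by a matrix of functions with prescribed first jet at $p$ --- identity value, vanishing antiholomorphic derivatives to preserve pseudo holomorphicity, and holomorphic derivatives $e_l(a_{ik})(p)=-\G_{il}^k(p)$ to kill the $(1,0)$-part of the connection --- which is exactly the paper's choice $f_{ij}(p)=\delta_{ij}$, $\ol{v_k}(f_{ij})(p)=0$, $v_k(f_{ij})(p)=-\G_{ik}^j(p)$. Your additional verification of the equivalence with $dg_{i\bj}(p)=0$, which the paper leaves implicit via Lemma \ref{lem-christoffel}, is a correct bonus.
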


\begin{proof}
Let $(v_1,v_2,\cdots,v_n)$ be a local pseudo holomorphic frame at $p$ and $(e_1,\cdots,e_n)$
be a local $(1,0)$-frame at $p$. Suppose that
\begin{equation}
e_i=f_{ij}v_j,
\end{equation}
where $f_{ij}$'s are local smooth functions to be determined. We first assume that  $\ol{v_{k}}(f_{ij})(p)=0$ for all $i,j$ and $k$. Then, $(e_1,e_2,\cdots,e_n)$ is a local pseudo holomorphic coordinate at $p$.

Moreover, note that
\begin{equation}
\nabla_{v_k}e_i(p)=v_k(f_{i\mu})(p)v_\mu(p)+f_{ij}(p)\G_{jk}^\mu(p) v_\mu(p)
\end{equation}
where $\G_{jk}^\mu$ is the Christoffel symbol with respect to the frame $(v_1,v_2,\cdots,v_n)$. So, if we choose $f_{ij}$ such that $f_{ij}(p)=\delta_{ij}$, $\ol{v_k}(f_{ij})(p)=0$ and $v_k(f_{ij})(p)=-\G_{ik}^j(p)$. Then $\nabla e_i(p)=0$ for all $i$.
\end{proof}

By a similar argument as in the proof of Lemma \ref{lem-pseudo-normal}, we have the existence of a so called local quasi holomorphic normal frame.
\begin{lem}\label{lem-quasi-normal}
Let $(M,J,g)$ be an almost Hermitian manifold. Then, there is a local quasi holomorphic frame $(e_1,e_2,\cdots,e_n)$ at $p$,
such that $\nabla e_i(p)=0$, or equivalently, $d g_{i\bj}(p)=0$ for all $i$ and $j$. We call the frame a local quasi holomorphic normal frame at $p$.
\end{lem}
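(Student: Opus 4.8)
The plan is to follow the argument of Lemma~\ref{lem-pseudo-normal}, but to start from a pseudo holomorphic frame and impose the quasi holomorphicity condition of Lemma~\ref{lem-exist-quasi-holo} simultaneously with the normality condition. First I would fix a local pseudo holomorphic frame $(v_1,\cdots,v_n)$ at $p$ and write $e_i=f_{ij}v_j$ for smooth functions $f_{ij}$ to be determined, setting $[\ol{v_j},v_i]^{(1,0)}=c_{i\bj}^kv_k$ as before, so that $c_{i\bj}^k(p)=0$. I would also let $\G_{jk}^\mu$ denote the Christoffel symbols of the canonical connection with respect to $(v_1,\cdots,v_n)$.

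The key point is that the three requirements on $f_{ij}$ constrain three distinct pieces of the $2$-jet of $f_{ij}$ at $p$, and so can be prescribed independently. Specifically, I would demand
\begin{equation}
\begin{split}
&f_{ij}(p)=\delta_{ij},\quad \ol{v_k}(f_{ij})(p)=0,\\
&v_k(f_{ij})(p)=-\G_{ik}^j(p),\quad v_l\ol{v_k}(f_{ij})(p)=-v_l(c_{i\bk}^j)(p)
\end{split}
\end{equation}
for all $i,j,k,l$. These conditions fix, respectively, the value, the $(0,1)$-first derivative, the $(1,0)$-first derivative, and the mixed $(1,1)$-second derivative of $f_{ij}$ at $p$; since these are independent coefficients of a local Taylor expansion, a smooth $f_{ij}$ realizing all of them exists (for instance a degree-two polynomial in local real coordinates). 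The first two conditions make $(e_1,\cdots,e_n)$ pseudo holomorphic at $p$ exactly as in Lemma~\ref{lem-exist-pseudo-holo}, and the third forces $\nabla_{v_k}e_i(p)=v_k(f_{i\mu})(p)v_\mu(p)+\G_{ik}^\mu(p)v_\mu(p)=0$ exactly as in Lemma~\ref{lem-pseudo-normal}, while $\nabla_{\ol{v_k}}e_i(p)=[\ol{v_k},e_i]^{(1,0)}(p)=0$ holds automatically by Lemma~\ref{lem-hol-con} and pseudo holomorphicity. Hence $\nabla e_i(p)=0$ for all $i$.

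The step I expect to require the most care is verifying that the newly prescribed derivative $v_k(f_{ij})(p)=-\G_{ik}^j(p)$, which was absent in Lemma~\ref{lem-exist-quasi-holo}, does not destroy quasi holomorphicity. Re-running the computation of $[v_k,[\ol{v_j},e_i]]^{(1,0)}(p)$ from that lemma, the only new term involving $v_k(f_{i\lam})(p)$ is $v_k(f_{i\lam})(p)[\ol{v_j},v_\lam]^{(1,0)}(p)=v_k(f_{i\lam})(p)c_{\lam\bj}^\mu(p)v_\mu(p)$, which vanishes precisely because $c_{\lam\bj}^\mu(p)=0$ for the pseudo holomorphic base frame. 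Thus the computation reduces, as before, to $(f_{i\lam}(p)v_k(c_{\lam\bj}^\mu)(p)+v_k\ol{v_j}(f_{i\mu})(p))v_\mu(p)$, which the fourth condition above kills; by Lemma~\ref{lem-criterion-quasi-holo} each $e_i$ is then quasi holomorphic at $p$.

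Finally I would record the equivalence with $dg_{i\bj}(p)=0$. Since $\nabla g=0$ and $\nabla e_i(p)=0$ (hence also $\nabla\ol{e_j}(p)=0$ by conjugation), differentiating $g_{i\bj}=\vv<e_i,\ol{e_j}>$ gives $dg_{i\bj}(p)=0$. Conversely, for a pseudo holomorphic frame $\G_{i\bj}^k(p)=0$ already, and $e_k(g_{i\bj})(p)=\G_{ik}^l(p)g_{l\bj}(p)$ by Lemma~\ref{lem-christoffel}, so $dg_{i\bj}(p)=0$ forces all $\G_{ik}^l(p)=0$, i.e. $\nabla e_i(p)=0$. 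This produces the desired local quasi holomorphic normal frame.
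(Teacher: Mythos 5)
Your proof is correct and takes exactly the route the paper intends: the paper's own ``proof'' is only the remark that one argues as in Lemma \ref{lem-pseudo-normal}, i.e.\ superimposes the normalization $v_k(f_{ij})(p)=-\G_{ik}^j(p)$ on the construction of Lemma \ref{lem-exist-quasi-holo}, which is precisely what you do. Your verification that the new first-order condition does not disturb quasi holomorphicity---because the only term involving $v_k(f_{i\lam})(p)$ carries the factor $c_{\lam\bj}^\mu(p)=0$---is exactly the point the paper leaves implicit.
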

\section{Curvature of almost Hermitian manifolds}
In this section, we derive a formula for the curvature tensor with respect to a quasi holomorphic frame.

\begin{lem}\label{lem-part-curv}
Let $(M,g,J)$ be an almost Hermitian manifold and $(e_1,\cdots,e_n)$ be
a local quasi holomorphic frame at $p\in M$. Then
\begin{equation}
\nabla_{e_k}\nabla_{\ol{e_j}}e_i(p)=0
\end{equation}
for all $i,j$ and $k$.
\end{lem}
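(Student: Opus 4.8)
The plan is to combine the defining property of a quasi holomorphic frame with Lemma \ref{lem-hol-con}, which identifies $\nabla_{\ol{e_j}}e_i$ with the bracket $[\ol{e_j},e_i]^{(1,0)}$. The key observation is that both connection terms in the expression $\nabla_{e_k}\nabla_{\ol{e_j}}e_i(p)$ can be rewritten using brackets, so that the quantity we must kill is essentially $[e_k,[\ol{e_j},e_i]]^{(1,0)}(p)$, which vanishes by the very definition of quasi holomorphicity (together with the criterion in Lemma \ref{lem-criterion-quasi-holo}).

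The first step is to apply Lemma \ref{lem-hol-con} to write $\nabla_{\ol{e_j}}e_i=[\ol{e_j},e_i]^{(1,0)}$ as a globally defined $(1,0)$-vector field $W$. Then $\nabla_{e_k}\nabla_{\ol{e_j}}e_i(p)=\nabla_{e_k}W(p)$, and I must compute this covariant derivative in the holomorphic (not anti-holomorphic) direction. Since $W=[\ol{e_j},e_i]^{(1,0)}$, I expand $W=W^\mu e_\mu$ in the frame; then $\nabla_{e_k}W(p)=e_k(W^\mu)(p)e_\mu(p)+W^\lam(p)\G_{\lam k}^\mu(p)e_\mu(p)$. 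Because the frame is quasi holomorphic it is in particular pseudo holomorphic at $p$, so each $e_i$ is pseudo holomorphic at $p$, which forces $W(p)=[\ol{e_j},e_i]^{(1,0)}(p)=0$; hence the second (Christoffel) term drops out. What remains is $e_k(W^\mu)(p)e_\mu(p)$, i.e. the holomorphic directional derivative of the components of $W$ at $p$.

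The second step is to identify $e_k(W^\mu)(p)e_\mu(p)$ with $[e_k,W]^{(1,0)}(p)$. Since $W(p)=0$, the bracket $[e_k,W]=e_k(W^\mu)e_\mu-W^\mu[e_k,e_\mu]$ reduces at $p$ to $e_k(W^\mu)(p)e_\mu(p)$, so that $\nabla_{e_k}W(p)=[e_k,W]^{(1,0)}(p)=[e_k,[\ol{e_j},e_i]]^{(1,0)}(p)$. By Lemma \ref{lem-criterion-quasi-holo}, a pseudo holomorphic vector field $X$ at $p$ is quasi holomorphic at $p$ precisely when $[e_i,[\ol{e_j},X]]^{(1,0)}(p)=0$ for all $i,j$; applying this with $X=e_i$ (which is quasi holomorphic at $p$ by hypothesis) yields $[e_k,[\ol{e_j},e_i]]^{(1,0)}(p)=0$, completing the argument.

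I expect the only delicate point to be the careful bookkeeping that the Christoffel contribution vanishes and that the holomorphic bracket at $p$ captures exactly the derivative term; both rely essentially on the vanishing $W(p)=0$, which is guaranteed by pseudo holomorphicity. There is no serious analytic obstacle here: the result is a formal consequence of Lemmas \ref{lem-hol-con} and \ref{lem-criterion-quasi-holo} once the two bracket identities are set up, so the proof should be short.
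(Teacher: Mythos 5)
Your strategy is sound and lands on exactly the same reduction as the paper --- everything comes down to $[e_k,[\ol{e_j},e_i]]^{(1,0)}(p)=0$, which is the quasi holomorphicity of $e_i$ --- but there is one glossed step that genuinely needs an argument. You set $W=[\ol{e_j},e_i]^{(1,0)}=\nabla_{\ol{e_j}}e_i$ and correctly get $\nabla_{e_k}W(p)=[e_k,W]^{(1,0)}(p)$ from $W(p)=0$. You then assert $[e_k,W]^{(1,0)}(p)=[e_k,[\ol{e_j},e_i]]^{(1,0)}(p)$, silently replacing the $(1,0)$-part $W$ by the \emph{full} bracket inside the outer bracket. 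On an almost complex manifold $[\ol{e_j},e_i]$ has in general a nontrivial $(0,1)$-part $V=[\ol{e_j},e_i]^{(0,1)}$, and both the definition of quasi holomorphicity and Lemma \ref{lem-criterion-quasi-holo} are stated with the full inner bracket, so you owe the reader $[e_k,V]^{(1,0)}(p)=0$. This is true, but for a reason you never invoke: $\ol{V}=-[\ol{e_i},e_j]^{(1,0)}$ vanishes at $p$ because $e_j$ is pseudo holomorphic at $p$, so $V(p)=0$; writing $V=V^{\bmu}\,\ol{e_\mu}$, one then has $[e_k,V](p)=e_k(V^{\bmu})(p)\,\ol{e_\mu}(p)$, which is purely of type $(0,1)$ and hence contributes nothing to the $(1,0)$-part. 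In other words, you need the vanishing of the \emph{entire} bracket $[\ol{e_j},e_i](p)=0$, which is precisely the paper's equation \eqref{eqn-lie-hol}. (A harmless slip as well: $[e_k,W^\mu e_\mu]=e_k(W^\mu)e_\mu+W^\mu[e_k,e_\mu]$, with a plus sign; at $p$ this is irrelevant since $W^\mu(p)=0$.)

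For comparison, the paper sidesteps this bookkeeping by a different mechanism: it writes $\nabla_{\ol{e_j}}e_i=\nabla_{e_i}\ol{e_j}+[\ol{e_j},e_i]$ (vanishing of the mixed torsion), pairs $\nabla_{e_k}(\cdot)$ against $\ol{e_l}$ so that the $(0,1)$-field $\nabla_{e_k}\nabla_{e_i}\ol{e_j}$ drops out for type reasons, and then converts $\nabla_{e_k}[\ol{e_j},e_i]$ via the torsion identity $\nabla_{e_k}Z=\nabla_Z e_k+[e_k,Z]+\tau(e_k,Z)$ applied to the full bracket $Z=[\ol{e_j},e_i]$, whose extra terms die at $p$ because $Z(p)=0$ and $\tau$ is tensorial. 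Your componentwise route via Christoffel symbols is a legitimate alternative that avoids the torsion identity entirely, but it only closes once you patch the $(0,1)$-part as above; with that one line added, it becomes essentially the paper's proof.
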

\begin{proof} First, note that
\begin{equation}\label{eqn-lie-hol}
[\ol{e_j},e_i](p)=\nabla_{\ol{e_j}}e_i(p)-\nabla_{e_i}\ol{e_j}(p)=0
\end{equation}
by Lemma \ref{lem-hol-con}. Moreover,
\begin{equation}
\begin{split}
&\vv<\nabla_{e_k}\nabla_{\ol{e_j}}e_i,\ol{e_l}>(p)\\
=&\vv<\nabla_{e_k}(\nabla_{e_i}{\ol{e_j}}+[\ol{e_j},e_i]),\ol{e_l}>(p)\\
=&\vv<\nabla_{e_k}[\ol{e_j},e_i],\ol{e_l}>(p)\\
=&\vv<\nabla_{[\ol{e_j},e_i]}{e_k}+[e_k,[\ol{e_j},e_i]]+\tau(e_k,[\ol{e_j},e_i]),\ol{e_l}>(p)\\
=&\vv<[e_k,[\ol{e_j},e_i]]^{(1,0)},\ol{e_l}>(p)\\
=&0.
\end{split}
\end{equation}
This completes the proof since $\nabla_{e_k}\nabla_{\ol{e_j}}e_i(p)$ is a $(1,0)$-vector.
\end{proof}
We are now ready to compute the $(1,1)$-part of the  curvature tensor for an almost Hermitian manifold.
\begin{thm}\label{thm-curv}
Let $(M,g,J)$ be an almost Hermitian manifold. Let $(e_1,\cdots,e_n)$ be
a local quasi holomorphic  frame at $p$. Then
\begin{equation}
R_{i\bj k\bl}(p)=-\ol{e_l} e_k(g_{i\bj})+g^{\bmu\lam}e_k(g_{i\bmu})\ol{e_l}(g_{\lam\bj}).
\end{equation}
\end{thm}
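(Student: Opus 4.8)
The plan is to expand
$$R_{i\bj k\bl}(p)=\vv<\nabla_{e_k}\nabla_{\ol{e_l}}e_i-\nabla_{\ol{e_l}}\nabla_{e_k}e_i-\nabla_{[e_k,\ol{e_l}]}e_i,\ol{e_j}>(p)$$
directly from the definition of the curvature tensor and to show that only the middle term survives at $p$. The first term vanishes immediately by Lemma \ref{lem-part-curv}. For the bracket term I would observe that $[e_k,\ol{e_l}](p)=0$: its $(1,0)$-part equals $-[\ol{e_l},e_k]^{(1,0)}(p)=-\nabla_{\ol{e_l}}e_k(p)=0$ by Lemma \ref{lem-hol-con} and pseudo holomorphicity, while its $(0,1)$-part is the conjugate of $[\ol{e_k},e_l]^{(1,0)}(p)=\nabla_{\ol{e_k}}e_l(p)=0$. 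Hence $\nabla_{[e_k,\ol{e_l}]}e_i(p)=0$, and writing $\nabla_{e_k}e_i=\G_{ik}^\lam e_\lam$ together with $\nabla_{\ol{e_l}}e_\lam(p)=0$ reduces the formula to $R_{i\bj k\bl}(p)=-\ol{e_l}(\G_{ik}^\lam)(p)\,g_{\lam\bj}(p)$.

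Next I would convert the derivative of the Christoffel symbol into derivatives of the metric. Differentiating the identity $\vv<\nabla_{e_k}e_i,\ol{e_j}>=\G_{ik}^\lam g_{\lam\bj}$ along $\ol{e_l}$ and inserting the value $\G_{ik}^\lam(p)=g^{\bmu\lam}e_k(g_{i\bmu})(p)$ from Lemma \ref{lem-christoffel} gives
$$\ol{e_l}(\G_{ik}^\lam)(p)\,g_{\lam\bj}(p)=\ol{e_l}\vv<\nabla_{e_k}e_i,\ol{e_j}>(p)-g^{\bmu\lam}e_k(g_{i\bmu})(p)\,\ol{e_l}(g_{\lam\bj})(p).$$
The second term already reproduces the quadratic term in the statement, so after substituting back it remains only to prove that $\ol{e_l}\vv<\nabla_{e_k}e_i,\ol{e_j}>(p)=\ol{e_l}e_k(g_{i\bj})(p)$.

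This last identity is the step I expect to be the main obstacle, and it is precisely where quasi holomorphicity (rather than mere pseudo holomorphicity) is needed. By metric compatibility $e_k(g_{i\bj})=\vv<\nabla_{e_k}e_i,\ol{e_j}>+\vv<e_i,\nabla_{e_k}\ol{e_j}>$, so the claim is equivalent to $\ol{e_l}\vv<e_i,\nabla_{e_k}\ol{e_j}>(p)=0$. Since the canonical connection is real, I would write $\nabla_{e_k}\ol{e_j}=\ol{\nabla_{\ol{e_k}}e_j}=\ol{\G_{j\bk}^m}\,\ol{e_m}$, so that $\vv<e_i,\nabla_{e_k}\ol{e_j}>=\ol{\G_{j\bk}^m}\,g_{i\bm}$; differentiating along $\ol{e_l}$ and using $\G_{j\bk}^m(p)=0$ leaves only the term $\ol{e_l(\G_{j\bk}^m)}(p)\,g_{i\bm}(p)$. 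Finally $e_l(\G_{j\bk}^m)(p)=0$, because $\nabla_{e_l}\nabla_{\ol{e_k}}e_j=e_l(\G_{j\bk}^m)e_m+\G_{j\bk}^m\nabla_{e_l}e_m$ vanishes at $p$ by Lemma \ref{lem-part-curv} while $\G_{j\bk}^m(p)=0$. Combining the three displays then yields $R_{i\bj k\bl}(p)=-\ol{e_l}e_k(g_{i\bj})(p)+g^{\bmu\lam}e_k(g_{i\bmu})(p)\,\ol{e_l}(g_{\lam\bj})(p)$, as claimed.
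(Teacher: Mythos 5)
Your proof is correct and follows essentially the same route as the paper: both arguments kill the $\nabla_{e_k}\nabla_{\ol{e_l}}e_i$ term and the bracket term at $p$ via Lemma \ref{lem-part-curv} and $[e_k,\ol{e_l}](p)=0$, reduce the surviving term to metric derivatives via Lemma \ref{lem-christoffel}, and invoke quasi holomorphicity only through Lemma \ref{lem-part-curv} --- your identity $e_l(\G_{j\bk}^m)(p)=0$ is precisely the component form of the paper's vanishing of $\nabla_{\ol{e_l}}\nabla_{e_k}\ol{e_j}$ at $p$. The only difference is bookkeeping: you track Christoffel symbols explicitly, whereas the paper moves $\ol{e_l}$ across inner products by metric compatibility.
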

\begin{proof}
\begin{equation}
\begin{split}
&R_{i\bj k\bl}(p)\\
=&\vv<\nabla_{e_k}\nabla_{\ol{e_l}}e_i,\ol{e_j}>(p)-\vv<\nabla_{\ol{e_l}}\nabla_{e_k}e_i,\ol{e_j}>(p)-\vv<\nabla_{[e_k,\ol{e_l}]}e_i,\ol{e_j}>(p)\\
=&-\vv<\nabla_{\ol{e_l}}\nabla_{e_k}e_i,\ol{e_j}>(p)\\
=&-\ol{e_l}\vv<\nabla_{e_k}e_i,\ol{e_j}>(p)+\vv<\nabla_{e_k}e_i,\nabla_{\ol{e_l}}\ol{e_j}>(p)\\
=&-\ol{e_l}e_k(g_{i\bj})(p)+\ol{e_l}\vv<e_i,\nabla_{e_k}\ol{e_j}>(p)+\vv<\nabla_{e_k}e_i,\nabla_{\ol{e_l}}\ol{e_j}>(p)\\
=&-\ol{e_l}e_k(g_{i\bj})(p)+\vv<e_i,\nabla_{\ol{e_l}}\nabla_{e_k}\ol{e_j}>(p)+\vv<\nabla_{e_k}e_i,\nabla_{\ol{e_l}}\ol{e_j}>(p)\\
=&-\ol{e_l} e_k(g_{i\bj})+g^{\bmu\lam}e_k(g_{i\bmu})\ol{e_l}(g_{\lam\bj})
\end{split}
\end{equation}
where we have used Lemma \ref{lem-hol-con}, \eqref{eqn-lie-hol}, Lemma \ref{lem-christoffel} and Lemma \ref{lem-part-curv}.
\end{proof}

By Lemma \ref{lem-quasi-normal}, we have following direct corollary.
\begin{cor}\label{cor-curv-normal}
Let $(M,J,g)$ be an almost Hermitian manifold and $(e_1,e_2,\cdots,e_n)$ be a local quasi holomorphic normal frame at $p\in M$. Then
\begin{equation}
R_{i\bj k\bl}(p)=-\ol{e_l}{e_k}(g_{i\bj})(p).
\end{equation}
\end{cor}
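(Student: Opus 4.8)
The plan is to derive Corollary \ref{cor-curv-normal} as an immediate specialization of Theorem \ref{thm-curv}, using the defining property of a local quasi holomorphic \emph{normal} frame to annihilate the second term in the general curvature formula. By Theorem \ref{thm-curv}, for any local quasi holomorphic frame $(e_1,\cdots,e_n)$ at $p$ we have
\begin{equation*}
R_{i\bj k\bl}(p)=-\ol{e_l}e_k(g_{i\bj})(p)+g^{\bmu\lam}e_k(g_{i\bmu})(p)\ol{e_l}(g_{\lam\bj})(p).
\end{equation*}
A local quasi holomorphic normal frame (which exists by Lemma \ref{lem-quasi-normal}) is in particular a local quasi holomorphic frame, so this formula applies verbatim.

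The next step is to observe that normality forces the first-derivative factors in the quadratic term to vanish at $p$. By Lemma \ref{lem-quasi-normal}, a normal frame satisfies $dg_{i\bj}(p)=0$ for all $i,j$, equivalently $\nabla e_i(p)=0$. In particular $e_k(g_{i\bmu})(p)=0$, so the factor $g^{\bmu\lam}e_k(g_{i\bmu})(p)$ is identically zero; the entire second term $g^{\bmu\lam}e_k(g_{i\bmu})(p)\ol{e_l}(g_{\lam\bj})(p)$ therefore drops out. (One could equally invoke the vanishing of $\ol{e_l}(g_{\lam\bj})(p)$ to kill it.) What survives is precisely
\begin{equation*}
R_{i\bj k\bl}(p)=-\ol{e_l}e_k(g_{i\bj})(p),
\end{equation*}
which is the claimed identity.

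I should emphasize that the surviving term $\ol{e_l}e_k(g_{i\bj})(p)$ is \emph{not} itself zero in a normal frame: normality only kills first derivatives of the metric, while this is a mixed second derivative, exactly analogous to the Hermitian situation where the curvature in a normal holomorphic frame equals $-\ol{e_l}e_k(g_{i\bj})(p)$. So there is no obstacle of substance here; the only point requiring a moment's care is to confirm that the $e_k(g_{i\bmu})(p)$ appearing in the quadratic term is genuinely a first derivative of the metric tensor components along the frame, and hence is annihilated by the normal-frame condition $dg_{i\bj}(p)=0$, rather than being something protected by the second-order quasi holomorphicity condition. Once that identification is made, the corollary follows in a single line.
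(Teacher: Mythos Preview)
Your proof is correct and follows exactly the approach intended by the paper: the corollary is stated there as an immediate consequence of Theorem \ref{thm-curv} and Lemma \ref{lem-quasi-normal}, with the normality condition $dg_{i\bj}(p)=0$ killing the quadratic term. Your elaboration simply spells out what the paper leaves implicit.
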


\section{A generalization of Wu's result}
In this section, with the help the curvature formula derive in the last section, we obtain a generalization of Wu's result in \cite{Wu}.

\begin{thm}
Let $(M,J)$ be an almost complex manifold. Let $g,h$ be two almost Hermitian metrics
on $M$. Then
 $$R^{g+h}(X,\ol X,Y,\ol Y)\leq R^g(X,\ol X,Y,\ol Y)+R^h(X,\ol X,Y,\ol Y)$$
 for any two $(1,0)$-vectors $X$ and $Y$, where $R^{g+h},R^g$ and $R^h$ denote the curvature tensor of the metrics $g+h, g$ and $h$ respectively.
\end{thm}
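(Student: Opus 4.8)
The plan is to compute all three curvatures at a fixed point $p$ in \emph{one and the same} frame. The crucial observation is that, by Lemma \ref{lem-exist-quasi-holo}, a local quasi holomorphic frame at $p$ exists by virtue of the almost complex structure $J$ alone, with no reference to any metric. Hence I may fix a single quasi holomorphic frame $(e_1,\cdots,e_n)$ at $p$ and apply the curvature formula of Theorem \ref{thm-curv} \emph{simultaneously} to $g$, $h$ and $g+h$ with respect to this common frame. I deliberately use the full two-term formula of Theorem \ref{thm-curv} rather than the normal-frame Corollary \ref{cor-curv-normal}, because a frame normal for $g+h$ is in general normal for neither $g$ nor $h$, so the quadratic terms cannot be discarded.

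Writing $X=X^ie_i$ and $Y=Y^ke_k$ and contracting, I split
\[
R^g(X,\ol X,Y,\ol Y)=L^g+Q^g,\qquad
L^g=-\ol{e_l}e_k(g_{i\bj})X^i\ol{X^j}Y^k\ol{Y^l},\quad
Q^g=g^{\bmu\lam}e_k(g_{i\bmu})\ol{e_l}(g_{\lam\bj})X^i\ol{X^j}Y^k\ol{Y^l}.
\]
The second-derivative part $L^g$ is linear in the metric, hence additive, $L^{g+h}=L^g+L^h$, so that
\[
R^{g+h}(X,\ol X,Y,\ol Y)-R^g(X,\ol X,Y,\ol Y)-R^h(X,\ol X,Y,\ol Y)=Q^{g+h}-Q^g-Q^h,
\]
and the whole theorem reduces to the subadditivity $Q^{g+h}\leq Q^g+Q^h$ of the quadratic term.

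Next I would recognize $Q^g$ as a genuine Hermitian form. Setting $a_\bmu=e_k(g_{i\bmu})X^iY^k$ and using the Hermitian symmetry $g_{\lam\bj}=\ol{g_{j\bla}}$ together with $\ol{e_l}(f)=\ol{e_l(\ol f)}$, one checks that $\ol{e_l}(g_{\lam\bj})\ol{X^j}\,\ol{Y^l}=\ol{a_\bla}$, so that $Q^g=g^{\bmu\lam}a_\bmu\ol{a_\bla}$ is precisely the positive semidefinite form $a^{*}G^{-1}a$ attached to the positive definite Hermitian matrix $G=(g_{i\bj})$ and the vector $a=(a_\bmu)$. The same bookkeeping gives $Q^h=b^{*}H^{-1}b$ with $b_\bmu=e_k(h_{i\bmu})X^iY^k$ and $H=(h_{i\bj})$; and since $(g+h)_{i\bj}=g_{i\bj}+h_{i\bj}$ forces the $g+h$ vector to be $a+b$ and its metric matrix to be $G+H$, one gets $Q^{g+h}=(a+b)^{*}(G+H)^{-1}(a+b)$. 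Thus the geometric claim collapses to the purely linear-algebraic inequality
\[
(a+b)^{*}(G+H)^{-1}(a+b)\leq a^{*}G^{-1}a+b^{*}H^{-1}b
\]
for positive definite Hermitian $G,H$ and arbitrary vectors $a,b$.

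The main obstacle is establishing this inequality cleanly; it is the heart of the argument. I would prove it through the variational (Legendre) identity $a^{*}G^{-1}a=\max_v\lf(2\Ree(v^{*}a)-v^{*}Gv\ri)$: adding the analogous identities for $(b,H)$ and evaluating the two maxima at the common vector $v=(G+H)^{-1}(a+b)$ yields exactly $2\Ree\big(v^{*}(a+b)\big)-v^{*}(G+H)v=(a+b)^{*}(G+H)^{-1}(a+b)$ on the right and the sum $a^{*}G^{-1}a+b^{*}H^{-1}b$ as an upper bound on the left; a matrix Cauchy--Schwarz argument works equally well. Assembling the pieces gives $Q^{g+h}-Q^g-Q^h\leq 0$, hence $R^{g+h}(X,\ol X,Y,\ol Y)\leq R^g(X,\ol X,Y,\ol Y)+R^h(X,\ol X,Y,\ol Y)$. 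Apart from this inequality, the only points demanding care are the conjugation bookkeeping identifying $Q^g$ with a positive Hermitian form and the verification that $L^g$ is honestly additive when computed in one fixed quasi holomorphic frame.
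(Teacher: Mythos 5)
Your proof is correct, but it takes a genuinely different route from the paper's. The paper also computes all three curvatures at $p$ in one common frame via Theorem \ref{thm-curv}, but instead of an arbitrary quasi holomorphic frame it chooses a quasi holomorphic \emph{normal} frame for the sum metric $g+h$ (Lemma \ref{lem-quasi-normal}), so that by Corollary \ref{cor-curv-normal} the quadratic term $Q^{g+h}$ vanishes identically at $p$; in your notation this gauge choice forces $a+b=0$, and the inequality then follows from nothing more than the nonnegativity of the two remaining Hermitian forms $Q^g=a^{*}G^{-1}a\geq 0$ and $Q^h=b^{*}H^{-1}b\geq 0$. Your worry that in a $(g+h)$-normal frame ``the quadratic terms cannot be discarded'' is accurate but immaterial: they need not be discarded, only given a sign, which is exactly how the paper concludes. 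By contrast, your arbitrary-frame computation genuinely requires the subadditivity $(a+b)^{*}(G+H)^{-1}(a+b)\leq a^{*}G^{-1}a+b^{*}H^{-1}b$, and your Legendre-duality argument (evaluating $x^{*}A^{-1}x=\max_v\lf(2\,\Ree(v^{*}x)-v^{*}Av\ri)$ for both summands at the common maximizer $v=(G+H)^{-1}(a+b)$) does establish it correctly; your conjugation bookkeeping identifying $\ol{e_l}(g_{\lam\bj})\ol{X^jY^l}=\ol{a_\bla}$ is also right. What your version buys is frame-independence and the explicit identification of the joint convexity of $(a,G)\mapsto a^{*}G^{-1}a$ as the analytic heart of the statement; what the paper's version buys is brevity, since the normal-frame gauge makes the hard term vanish outright --- indeed your matrix inequality at the special point $a+b=0$ is trivial, which is precisely the simplification the paper exploits. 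Both arguments share the same skeleton: the second-derivative term $L$ is linear in the metric, so everything reduces to controlling the quadratic terms.
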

\begin{proof}
Let $p\in M$, and $(e_1,e_2,\cdots,e_n)$ be a local quasi holomorphic normal frame at $p$ with respect to the almost Hermitian metric $g+h$.
Then, by Theorem \ref{thm-curv} and Corollary \ref{cor-curv-normal},
\begin{equation}
\begin{split}
&R^{g+h}_{i\bj k\bl}(p)\\
=&-\ol{e_l} e_k((g+h)_{i\bj})(p)\\
=&R^g_{i\bj k\bl}(p)+R^h_{i\bj k\bl}(p)-g^{\bmu\lam}e_k (g_{i\bmu})\ol{e_{l}} (g_{\lam\bj})(p)-h^{\bmu\lam}e_k (h_{i\bmu})\ol{e_{l}} (h_{\lam\bj})(p).
\end{split}
\end{equation}
Hence,
\begin{equation}
\begin{split}
&R^{g+h}(X,\ol X,Y,\ol Y)(p)\\
=&R^{g}(X,\ol X,Y,\ol Y)(p)+R^{h}(X,\ol X,Y,\ol Y)(p)\\
&-g^{\bmu\lam}X^i Y^k e_k(g_{i\bmu})\ol{X^jY^l}\ol{e_{l}} (g_{\lam\bj})(p)-h^{\bmu\lam}X^i Y^k e_k (h_{i\bmu})\ol{X^jY^l} \ol{e_{l}} (h_{\lam\bj})(p)\\
\leq&R^{g}(X,\ol X,Y,\ol Y)(p)+R^{h}(X,\ol X,Y,\ol Y)(p).\\
\end{split}
\end{equation}
\end{proof}
Similar as in \cite{Wu}, we have the following two direct corollaries.
\begin{cor}
Let $(M,J,g)$ be a compact almost Hermitian manifold with nonpositive (negative) holomorphic bisectional curvature. Then, there is an almost Hermitian metric on $M$ with nonpositive (negative) holomorphic bisectional curvature that is invariant under all the automorphisms of the almost complex structure $J$.
\end{cor}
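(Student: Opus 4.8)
The plan is to produce the desired invariant metric by averaging a given metric over the automorphism group, exploiting the fact that the bisectional curvature inequality proved in the last theorem allows summation of metrics without losing the curvature sign. Let $G=\mathrm{Aut}(M,J)$ be the group of diffeomorphisms of $M$ preserving $J$. First I would observe that since $(M,J,g)$ is compact, $G$ is a compact Lie group (this is the standard fact that the automorphism group of a compact almost complex manifold, being a closed subgroup of the isometry group of an averaged Riemannian metric, is compact). Let $d\mu$ denote the normalized Haar measure on $G$, and set
\begin{equation}
\tilde g=\int_G \sigma^{*}g\; d\mu(\sigma).
\end{equation}
Because each $\sigma\in G$ preserves $J$, each pullback $\sigma^{*}g$ is again an almost Hermitian metric on $(M,J)$, and the integral average $\tilde g$ is a smooth $J$-invariant Riemannian metric, hence almost Hermitian. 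By construction $\tilde g$ is invariant under every element of $G$.

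**Propagating the curvature sign through the average.**
The key point is to show $\tilde g$ inherits nonpositive (respectively negative) holomorphic bisectional curvature. The theorem just proved gives, for two almost Hermitian metrics $g_1,g_2$ and any $(1,0)$-vectors $X,Y$,
\begin{equation}
R^{g_1+g_2}(X,\ol X,Y,\ol Y)\leq R^{g_1}(X,\ol X,Y,\ol Y)+R^{g_2}(X,\ol X,Y,\ol Y).
\end{equation}
Iterating this subadditivity gives the same inequality for any finite sum of almost Hermitian metrics. Next I would use that curvature is a natural (diffeomorphism-equivariant) invariant of the canonical connection: since $\sigma$ is an automorphism of $(M,J)$ carrying $g$ to $\sigma^{*}g$, it carries the canonical connection of $g$ to that of $\sigma^{*}g$, so
\begin{equation}
R^{\sigma^{*}g}(X,\ol X,Y,\ol Y)(p)=R^{g}(\sigma_{*}X,\ol{\sigma_{*}X},\sigma_{*}Y,\ol{\sigma_{*}Y})(\sigma(p)).
\end{equation}
In particular each $\sigma^{*}g$ has nonpositive (negative) holomorphic bisectional curvature whenever $g$ does. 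The approximation of the Haar integral by finite averages $\frac1N\sum_{a=1}^{N}\sigma_a^{*}g$, combined with the iterated subadditivity, then yields $R^{\tilde g}(X,\ol X,Y,\ol Y)\leq 0$ (resp. $<0$) in the limit.

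**The main obstacle.**
The step I expect to require the most care is making the passage to the limit rigorous and, in the negative case, preserving \emph{strict} negativity. Subadditivity as stated controls the average from above by the average of the individual curvatures, which is strictly negative; so on the compact unit $(1,0)$-frame bundle the continuous function $(X,Y)\mapsto R^{\tilde g}(X,\ol X,Y,\ol Y)$ attains a negative maximum, giving the strict inequality. The delicate part is to argue the inequality passes to the continuous integral rather than merely to finite sums: one views $R^{\tilde g}$ as a continuous functional of the metric in the $C^{2}$-topology (curvature depends on two derivatives of $g_{i\bj}$, via the formula of Theorem \ref{thm-curv}), notes that finite Riemann sums of $\sigma^{*}g$ converge to $\tilde g$ in $C^{\infty}$ by compactness of $G$ and smooth dependence of $\sigma^{*}g$ on $\sigma$, and invokes continuity of $R^{\tilde g}$ together with the finite-sum inequalities. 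A cleaner alternative, avoiding the limit entirely, is to differentiate the curvature formula directly under the integral sign: using a local quasi holomorphic normal frame at $p$ for $\tilde g$ and Corollary \ref{cor-curv-normal}, one computes $R^{\tilde g}(X,\ol X,Y,\ol Y)(p)$ and bounds it by $\int_G R^{\sigma^{*}g}(X,\ol X,Y,\ol Y)(p)\,d\mu(\sigma)$ via the same Cauchy--Schwarz estimate on the first-derivative correction terms that produced the inequality in the theorem. Either route completes the proof.
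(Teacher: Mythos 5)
Your averaging strategy is exactly what the paper intends: the text offers no argument beyond ``Similar as in \cite{Wu}'', and Wu's corollary is proved by precisely this device --- average the metric over the automorphism group and control the curvature of the average by the subadditivity theorem. Of your two ways of handling the integral, the second is both correct and the cleaner one: quasi holomorphicity of a frame is a property of $J$ alone, not of the metric, so a quasi holomorphic normal frame for $\tilde g$ at $p$ is a quasi holomorphic (generally non-normal) frame for every $\sigma^{*}g$; Corollary \ref{cor-curv-normal} applied to $\tilde g$, differentiation under the integral sign, and Theorem \ref{thm-curv} applied to each $\sigma^{*}g$ then give, after discarding the nonnegative Hermitian-square correction terms, $R^{\tilde g}(X,\ol X,Y,\ol Y)(p)\leq\int_G R^{\sigma^{*}g}(X,\ol X,Y,\ol Y)(p)\,d\mu(\sigma)$, and strict negativity comes for free with no limiting procedure at all. (If you insist on the Riemann-sum route, note that you are averaging rather than summing, so you also need $R^{cg}=cR^{g}$ for constants $c>0$; this holds because the canonical connection is unchanged under scaling, but it deserves a sentence.)

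The genuine gap is your compactness claim for $G=\mathrm{Aut}(M,J)$. The ``standard fact'' you invoke is false: the automorphism group of a compact almost complex manifold need not be compact --- for $\mathbb{CP}^1$ with its standard complex structure it is $\mathrm{PGL}(2,\C)$ --- and your parenthetical justification is circular, since producing an ``averaged Riemannian metric'' already presupposes a compact group (or at least an invariant probability measure) to average over. What is true is that $\mathrm{Aut}(M,J)$ is a Lie group (Boothby--Kobayashi--Wang); compactness must be extracted from the curvature hypothesis, not from compactness of $M$. In the negative case this can be repaired: taking $Y=X$ shows the holomorphic sectional curvature of the canonical connection is negative, hence bounded above by a negative constant by compactness of the unit tangent bundle, so $(M,J)$ is hyperbolic in Kobayashi's almost-complex sense \cite{k}; the Kobayashi pseudodistance is then a genuine distance on which $\mathrm{Aut}(M,J)$ acts by isometries, and compactness follows by the Arzel\`a--Ascoli/van Dantzig--van der Waerden argument (in Wu's complex setting the group is even finite). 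In the nonpositive case no such soft argument is available --- the flat torus shows $G$ can be positive-dimensional even when compact --- and neither you nor the paper supplies a reason why $G$ is compact there; as written, your proof (and the nonpositive half of the corollary) is complete only once compactness of $\mathrm{Aut}(M,J)$ is separately established, or the statement is read as invariance under a given compact group of automorphisms.
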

\begin{defn}
Let $(M,J)$ be an almost complex manifold. Denote the collection of all almost Hermitian metrics with nonpositive holomorphic bisectional curvature as $\mathcal H(M)$.
\end{defn}
\begin{cor}
Let $(M,J)$ be an almost complex manifold with $\mathcal{H}(M)\neq\emptyset$.  Then $\mathcal H(M)$ is a convex cone.
\end{cor}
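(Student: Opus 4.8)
The plan is to verify the two defining closure properties of a convex cone separately: closure under multiplication by positive scalars, and closure under addition. Together these imply that any combination $\a g+\be h$ with $\a,\be>0$ and $g,h\in\mathcal{H}(M)$ again lies in $\mathcal{H}(M)$, which is precisely the assertion that $\mathcal{H}(M)$ is a convex cone.

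For the scaling property, fix $g\in\mathcal{H}(M)$ and $\lam>0$. The first thing I would observe is that the canonical connection is invariant under constant rescaling of the metric. Indeed, the canonical connection $\nabla$ of $g$ satisfies $\nabla g=0$, $\nabla J=0$, and has vanishing $(1,1)$-part of torsion. Since $\nabla(\lam g)=\lam\nabla g=0$, since $\nabla J=0$ does not involve the metric, and since the torsion is independent of the metric, $\nabla$ is also an almost Hermitian connection for $\lam g$ with vanishing $(1,1)$-part of torsion. By the uniqueness of the canonical connection, the canonical connection of $\lam g$ coincides with that of $g$. Consequently the curvature operator $\nabla_Z\nabla_W-\nabla_W\nabla_Z-\nabla_{[Z,W]}$ is unchanged, and only the metric used to contract the last index is rescaled, giving $R^{\lam g}(X,\ol X,Y,\ol Y)=\lam R^g(X,\ol X,Y,\ol Y)$ for all $(1,0)$-vectors $X,Y$. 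As $\lam>0$, nonpositivity is preserved and $\lam g\in\mathcal{H}(M)$.

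For additivity, let $g,h\in\mathcal{H}(M)$. The sum $g+h$ is again a Riemannian metric, and it is $J$-invariant because both summands are, so $g+h$ is an almost Hermitian metric. Applying the theorem just proved yields
\begin{equation*}
R^{g+h}(X,\ol X,Y,\ol Y)\leq R^{g}(X,\ol X,Y,\ol Y)+R^{h}(X,\ol X,Y,\ol Y)\leq 0
\end{equation*}
for all $(1,0)$-vectors $X,Y$, where the final inequality uses $g,h\in\mathcal{H}(M)$. Hence $g+h\in\mathcal{H}(M)$, and combining the two closure properties finishes the argument.

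The proof presents no serious obstacle: the additivity is a direct consequence of the preceding theorem, and the only point requiring any care is the homogeneity $R^{\lam g}(X,\ol X,Y,\ol Y)=\lam R^g(X,\ol X,Y,\ol Y)$ on the $(1,1)$-part, which rests on the scale-invariance of the canonical connection. I would therefore spend the bulk of the write-up making that invariance explicit and leave the remaining steps as immediate.
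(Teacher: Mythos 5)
Your proof is correct and follows exactly the route the paper intends: the paper states this as a direct corollary of the theorem $R^{g+h}(X,\ol X,Y,\ol Y)\leq R^g(X,\ol X,Y,\ol Y)+R^h(X,\ol X,Y,\ol Y)$, with the positive-scaling property left implicit. Your observation that the canonical connection is unchanged under $g\mapsto\lam g$ (by uniqueness), so that the $(1,1)$-curvature simply scales by $\lam$, is the right way to make that implicit step explicit.
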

\section{A generalization of Zheng's result}
Let $\a$ be a smooth $(r,s)$ form on the almost complex manifold $(M,J)$. Recall that $\dbar \a=(d\a)^{(r,s+1)}$ and a $(r,0)$-form $\a$ is said to be a holomorphic $(r,0)$-form if $\dbar\a=0$.

\begin{lem}\label{lem-holo-lie}
A $(r,0)$-form $\a$ on an almost complex manifold is holomorphic if and only if $(L_{\ol X}\a)^{(r,0)}=0$ for any $(1,0)$ vector field $X$.
\end{lem}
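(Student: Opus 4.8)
The plan is to apply Cartan's formula $L_{\ol X}\a = d(\iota_{\ol X}\a) + \iota_{\ol X}(d\a)$ and then track bidegrees throughout. First I would observe that, since $\a$ is of type $(r,0)$ while $\ol X$ is a $(0,1)$-vector field, the interior product $\iota_{\ol X}\a$ vanishes identically: an $(r,0)$-form is a wedge of $(1,0)$-covectors, each of which annihilates any $(0,1)$-vector. Hence the first term of Cartan's formula drops out and $L_{\ol X}\a = \iota_{\ol X}(d\a)$.

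Next I would decompose $d\a$ into its components of pure bidegree. On an almost complex manifold $d$ sends an $(r,0)$-form into the three types $(r+1,0)$, $(r,1)$ and $(r-1,2)$, the last coming from the non-integrability of $J$, and by the definition recalled just before the lemma the $(r,1)$-component is exactly $\dbar\a$. Contracting with the $(0,1)$-vector $\ol X$, the $(r+1,0)$-component annihilates $\ol X$ by the same argument as above and contributes nothing, the $(r,1)$-component $\dbar\a$ yields an $(r,0)$-form, and the $(r-1,2)$-component yields an $(r-1,1)$-form. Extracting the $(r,0)$-part therefore isolates a single term,
$$(L_{\ol X}\a)^{(r,0)} = \iota_{\ol X}(\dbar\a).$$

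From this identity both directions follow at once. If $\a$ is holomorphic then $\dbar\a = 0$, so the right-hand side vanishes for every $X$. Conversely, if $\iota_{\ol X}(\dbar\a) = 0$ for all $(1,0)$-vector fields $X$, then since $\ol X$ ranges over all $(0,1)$-vector fields and $\dbar\a$ is an $(r,1)$-form possessing a single antiholomorphic slot, probing that slot with every $(0,1)$-vector forces $\dbar\a = 0$; expanding $\dbar\a$ in a local $(1,0)$-coframe and reading off the coefficients makes this vanishing completely explicit.

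The main obstacle is the careful bidegree bookkeeping forced by non-integrability: unlike the integrable Hermitian case, $d\a$ acquires the Nijenhuis term of type $(r-1,2)$, and I must verify that its contraction $\iota_{\ol X}(\bar\mu\a)$ lands in type $(r-1,1)$ and hence never contaminates the $(r,0)$-component. Once this degree count is confirmed the Nijenhuis contribution is harmless, and the whole equivalence reduces to the single displayed identity above.
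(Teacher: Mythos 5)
Your proposal is correct and follows essentially the same route as the paper: Cartan's formula $L_{\ol X}\a = i_{\ol X}d\a + d\,i_{\ol X}\a$, the vanishing of $i_{\ol X}\a$ by type, and bidegree bookkeeping isolating $(L_{\ol X}\a)^{(r,0)} = i_{\ol X}\dbar\a$. Your write-up merely makes explicit the degree count (including the harmless $(r-1,2)$-component of $d\a$) and the converse direction, which the paper compresses into ``the conclusion follows.''
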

\begin{proof}
By Cartan's formula,
\begin{equation}
\begin{split}
(L_{\ol X}\a)^{(r,0)}=(i_{\ol X}d\a+di_{\ol X}\a)^{(r,0)}=i_{\ol X}\dbar\a.
\end{split}
\end{equation}
The conclusion follows.
\end{proof}
\begin{defn}
A $(r,0)$-form $\a$ on an almost complex manifold is said to be pseudo holomorphic at $p\in M$ if $(L_{\ol X}\a)^{(r,0)}(p)=0$ for all $(1,0)$ vector field $X$.
\end{defn}
\begin{lem}\label{lem-crt-pseudo-holo-form}
Let $(M,J)$ be an almost complex manifold and $(e_1,e_2,\cdots,e_n)$ be a local pseudo holomorphic frame at $p\in M$. Let $(\omega^1,\omega^2,\cdots,\omega^n)$ be the dual frame of $(e_1,e_2,\cdots,e_n)$ and $\a$ be a $(r,0)$ form on $M$. Suppose that
\begin{equation}
\a=\sum_{i_1<i_2<\cdots<i_r}\a_{i_1i_2\cdots i_r}\omega^{i_1}\wedge\omega^{i_2}\wedge\cdots\wedge\omega^{i_r}.
\end{equation}
Then, $\alpha$ is pseudo holomorphic at $p$ if and only if $\bar v(\alpha_{i_1i_2\cdots i_r})(p)=0$ for all $v\in T^{1,0}_pM$ and any indices $i_1<i_2<\cdots<i_r$.
\end{lem}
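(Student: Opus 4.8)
The plan is to expand $L_{\ol X}\alpha$ by the Leibniz rule and isolate its $(r,0)$-part at $p$, showing that every contribution except the one that differentiates the coefficients $\alpha_{i_1i_2\cdots i_r}$ drops out at $p$ because of the pseudo holomorphicity of the frame. Write $I=(i_1<i_2<\cdots<i_r)$ and $\omega^I=\omega^{i_1}\wedge\omega^{i_2}\wedge\cdots\wedge\omega^{i_r}$, and recall that $\alpha$ is pseudo holomorphic at $p$ means $(L_{\ol X}\alpha)^{(r,0)}(p)=0$ for all $(1,0)$ vector fields $X$.

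First I would apply the Leibniz rule for the Lie derivative,
\[
L_{\ol X}\alpha=\sum_{I}\ol X(\alpha_I)\,\omega^I+\sum_{I}\alpha_I\,L_{\ol X}\omega^I .
\]
The first sum is already of type $(r,0)$ since each $\omega^I$ is an $(r,0)$-form. For the second sum I would expand $L_{\ol X}\omega^I=\sum_{a}\omega^{i_1}\wedge\cdots\wedge L_{\ol X}\omega^{i_a}\wedge\cdots\wedge\omega^{i_r}$ and observe that, because the remaining factors are all $(1,0)$-forms, the $(r,0)$-part of this expression only picks up the $(1,0)$-part of each $L_{\ol X}\omega^{i_a}$.

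The key step, which I expect to be the crux of the argument, is to show that $(L_{\ol X}\omega^i)^{(1,0)}(p)=0$. Since a $(1,0)$-form is determined by its values on $(1,0)$-vectors, it suffices to evaluate on the frame $e_k$. Using the identity $(L_{\ol X}\omega^i)(e_k)=\ol X(\omega^i(e_k))-\omega^i([\ol X,e_k])$ together with $\omega^i(e_k)=\delta^i_k$ being constant, and using that $\omega^i$ kills $(0,1)$-vectors, I obtain
\[
(L_{\ol X}\omega^i)(e_k)=-\omega^i\big([\ol X,e_k]^{(1,0)}\big).
\]
Because $(e_1,e_2,\cdots,e_n)$ is a local pseudo holomorphic frame at $p$, we have $[\ol X,e_k]^{(1,0)}(p)=0$ for every $(1,0)$ vector field $X$, so indeed $(L_{\ol X}\omega^i)^{(1,0)}(p)=0$, and hence the entire second sum contributes nothing to the $(r,0)$-part at $p$.

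Combining these, I conclude $(L_{\ol X}\alpha)^{(r,0)}(p)=\sum_{I}\ol X(\alpha_I)(p)\,\omega^I(p)$. Since the $\omega^I(p)$ form a basis of the $(r,0)$-covectors at $p$, this vanishes for all $(1,0)$ vector fields $X$ if and only if $\ol X(\alpha_I)(p)=0$ for every multi-index $I$; finally, as $\ol X(p)$ ranges over all $\ol v$ with $v\in T^{1,0}_pM$, this is precisely the stated criterion. The only subtlety to handle carefully is the passage from ``for all $(1,0)$ vector fields $X$'' to ``for all $v\in T^{1,0}_pM$,'' which is justified by extending any given $v$ to a $(1,0)$ vector field near $p$.
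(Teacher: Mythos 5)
Your proposal is correct and is essentially the paper's argument in dual form: the paper evaluates $(L_{\ol X}\a)(e_{i_1},\cdots,e_{i_r})(p)$ directly on the frame, while you differentiate the coframe and show $(L_{\ol X}\omega^i)^{(1,0)}(p)=0$, but both reduce to the same key fact $[\ol X,e_k]^{(1,0)}(p)=0$ and the same conclusion $(L_{\ol X}\a)^{(r,0)}(p)=\sum_I \ol X(\a_I)(p)\,\omega^I(p)$. Your closing remark that this expression depends only on $\ol X(p)=\bar v$, justifying the passage between vector fields and pointwise vectors, is a point the paper handles implicitly, and you handle it correctly.
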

\begin{proof}
Let $\ol{X}$ be a (1,0) vector field with $\ol{X}(p)=\bar v$. Then, for any indices $i_1<i_2<\cdots<i_r$,
\begin{equation}
\begin{split}
&(L_{\ol{X}})\a(e_{i_1},e_{i_2},\cdots,e_{i_r})(p)\\
=&\ol{X}(\a(e_{i_1},e_{i_2},\cdots,e_{i_r}))(p)-\a(L_{\ol{X}}e_{i_1}(p),e_{i_2}(p),\cdots,e_{i_r}(p))-\cdots\\
&-\a(e_{i_1}(p),e_{i_2}(p),\cdots,L_{\ol{X}}e_{i_r}(p))\\
=&\bar v(\a_{i_1i_2\cdots i_r})(p)-\a([\ol{X},e_{i_1}]^{(1,0)}(p),e_{i_2}(p),\cdots,e_{i_r}(p))-\cdots\\
&-\a(e_{i_1}(p),e_{i_2}(p),\cdots,[\ol{X},e_{i_r}]^{(1,0)}(p))\\
=&\bar v(\a_{i_1i_2\cdots i_r})(p).
\end{split}
\end{equation}
This completes the proof.
\end{proof}
\begin{thm}\label{thm-curv-2}
Let $(M,J,g)$ be an almost Hermitian manifold and $\a$ be a holomorphic $(1,0)$-form on $M$. Let $h$ be another metric on $M$ defined by
\begin{equation}
h(u,v)=g(u,v)+\a(u)\bar\a(v).
\end{equation}
for any two tangent vectors $u$ and $v$. Then $h$ is an almost Hermitian metric on $M$. Moreover,
\begin{equation}
R^h(X,\ol X,Y,\ol{Y})\leq R^g(X,\ol X,Y,\ol Y)
\end{equation}
for any $(1,0)$ vectors $X$ and $Y$.
\end{thm}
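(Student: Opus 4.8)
The plan is to run the same normal-frame computation that produced Theorem \ref{thm-curv}, but now comparing the two metrics term by term while exploiting the holomorphicity of $\a$. First I would record that $h$ is indeed almost Hermitian: writing $\a_i=\a(e_i)$, the Hermitian components of $h$ in any $(1,0)$-frame are $h_{i\bj}=g_{i\bj}+\a_i\ol{\a_j}$, a rank-one positive semidefinite perturbation of the positive definite matrix $(g_{i\bj})$, hence positive definite, while $J$-invariance is immediate because $\a$ is of type $(1,0)$ (so $\a(JX)=\ii\,\a(X)$). Fix $p\in M$ and choose, by Lemma \ref{lem-quasi-normal}, a local quasi holomorphic normal frame $(e_1,\dots,e_n)$ at $p$ \emph{with respect to $g$}, so that $dg_{i\bj}(p)=0$. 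Since this frame is quasi holomorphic, Theorem \ref{thm-curv} still applies to $h$ (it requires only a quasi holomorphic frame, not an $h$-normal one), whereas Corollary \ref{cor-curv-normal} applies to $g$.

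The heart of the argument is to control the first and second derivatives of the component functions $\a_i$ at $p$. Holomorphicity gives, via Lemma \ref{lem-holo-lie}, the identity $\ol{e_l}(\a_i)=\a([\ol{e_l},e_i])$ near $p$; since the frame is pseudo holomorphic, $[\ol{e_l},e_i]^{(1,0)}(p)=0$ and hence $\ol{e_l}(\a_i)(p)=0$ (this is also Lemma \ref{lem-crt-pseudo-holo-form}). Differentiating this identity by $e_k$ and using $[\ol{e_l},e_i](p)=0$ (equation \eqref{eqn-lie-hol}), I would reduce $e_k\ol{e_l}(\a_i)(p)$ to the pairing of $\a$ with $[e_k,[\ol{e_l},e_i]]^{(1,0)}(p)$, which vanishes precisely by the quasi holomorphic criterion, Lemma \ref{lem-criterion-quasi-holo}. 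Thus $e_k\ol{e_l}(\a_i)(p)=0$, and since $[e_k,\ol{e_l}](p)=0$ also $\ol{e_l}e_k(\a_i)(p)=0$; conjugating gives the analogous vanishing for $\ol{\a_j}$. \textbf{This second-order vanishing is the main obstacle}, and it is exactly the place where the stronger \emph{quasi} holomorphic frame (rather than merely pseudo holomorphic) is indispensable, mirroring its role in Theorem \ref{thm-curv} itself.

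With these vanishing statements in hand the computation is routine. Expanding $h_{i\bj}=g_{i\bj}+\a_i\ol{\a_j}$ at $p$ and discarding every term containing a vanishing derivative, I get $e_k(h_{i\bmu})(p)=e_k(\a_i)\ol{\a_\mu}(p)$, $\ol{e_l}(h_{\lam\bj})(p)=\a_\lam\ol{e_l(\a_j)}(p)$, and $-\ol{e_l}e_k(h_{i\bj})(p)=R^g_{i\bj k\bl}(p)-e_k(\a_i)\ol{e_l(\a_j)}(p)$. Feeding these into Theorem \ref{thm-curv} yields
\begin{equation*}
R^h_{i\bj k\bl}(p)=R^g_{i\bj k\bl}(p)-(1-B)\,e_k(\a_i)\ol{e_l(\a_j)}(p),\qquad B:=h^{\bmu\lam}\a_\lam\ol{\a_\mu}(p).
\end{equation*}
Finally I would note that $B=\|\a\|_h^2\in[0,1)$: by the Sherman--Morrison identity for the inverse of $g_{i\bj}+\a_i\ol{\a_j}$ one has, with $t:=\|\a\|_g^2\geq 0$, that $B=t/(1+t)<1$, so $1-B>0$. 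Contracting the displayed identity with $X^iY^k\ol{X^jY^l}$ then gives
\begin{equation*}
R^h(X,\ol X,Y,\ol Y)(p)=R^g(X,\ol X,Y,\ol Y)(p)-(1-B)\,\big|e_k(\a_i)X^iY^k\big|^2\leq R^g(X,\ol X,Y,\ol Y)(p),
\end{equation*}
which is the desired inequality.
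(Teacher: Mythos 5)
Your proof is correct, but it runs the normal-frame bookkeeping in the opposite direction from the paper, and the difference is worth noting. The paper chooses the quasi holomorphic normal frame with respect to $h$, not $g$: then Corollary \ref{cor-curv-normal} applies to $h$, the full formula of Theorem \ref{thm-curv} applies to $g$, and the resulting identity
\begin{equation*}
R^h_{i\bj k\bl}(p)=R^g_{i\bj k\bl}(p)-g^{\bmu\lam}e_k(g_{i\bmu})\ol{e_l}(g_{\lam\bj})(p)-e_k(\a_i)\ol{e_l(\a_j)}(p)
\end{equation*}
has \emph{both} correction terms manifestly nonpositive after contraction with $X^iY^k\ol{X^jY^l}$, exactly as in the proof of the Wu-type theorem, so no further estimate is needed. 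Your choice of a $g$-normal frame instead places the first-order term on the $h$ side with a \emph{positive} sign, $h^{\bmu\lam}e_k(h_{i\bmu})\ol{e_l}(h_{\lam\bj})(p)=B\,e_k(\a_i)\ol{e_l(\a_j)}(p)$, and you must then pay for it with the Sherman--Morrison computation $B=\|\a\|_h^2=t/(1+t)<1$; that step is correct, but it is extra work the paper's normalization avoids entirely. What your route buys in exchange is a sharper, fully explicit identity $R^h_{i\bj k\bl}(p)=R^g_{i\bj k\bl}(p)-(1+\|\a\|_g^2)^{-1}e_k(\a_i)\ol{e_l(\a_j)}(p)$, which quantifies the curvature decrease rather than merely bounding it. The shared technical core --- the second-order vanishing $e_k\ol{e_l}(\a_i)(p)=0$ --- you establish by pure Lie-bracket manipulation through Lemma \ref{lem-criterion-quasi-holo}, whereas the paper routes it through the canonical connection via Lemma \ref{lem-part-curv}; by Lemma \ref{lem-hol-con} these are the same fact. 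Your observation that Theorem \ref{thm-curv} requires only a quasi holomorphic frame (a property of $J$ alone, independent of the metric) is exactly right, and it is also what licenses the paper's asymmetric use of the two metrics in a single frame.
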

\begin{proof} It is easy to check that $h$ is an almost Hermitian metric by definition.

Fixed $p\in M$. Let $(e_1,e_2,\cdots,e_n)$ be a local quasi holomorphic normal frame at $p$ for the almost Hermitian metric $h$. Then, $h_{i\bj}=g_{i\bj}+\a_i\ol{\a_i}$. Since $\a$ is holomorphic on $M$, we have
\begin{equation}
\begin{split}
0=&\vv<L_{\ol{e_j}}\a,e_i>=\ol{e_j}\vv<\a,e_i>-\vv<\a,L_{\ol{e_j}}e_i>=\ol{e_j}(\a_i)-\vv<\a,\nabla_{\ol{e_j}}e_i>.\\
\end{split}
\end{equation}
Therefore,
\begin{equation}
\ol{e_j}e_k(\a_j)(p)=e_k\ol{e_j}(\a_i)(p)=e_k\vv<\a,\nabla_{\ol{e_j}}e_i>(p)=\vv<\a,\nabla_{e_k}\nabla_{\ol{e_j}}e_i(p)>=0\\
\end{equation}
by Lemma \ref{lem-part-curv} and $[\ol{e_j},e_k](p)=0$. Hence, by Theorem \ref{thm-curv} and Corollary \cite{cor-curv-normal},
\begin{equation}
\begin{split}
&R^h_{i\bj k\bl}(p)\\
=&-\ol{e_{l}}e_k(h_{i\bj})(p)\\
=&-\ol{e_{l}}e_k(g_{i\bj}+\alpha_i\ol{\a_j})(p)\\
=&-\ol{e_l}e_k(g_{i\bj})(p)-e_k(\alpha_i)\ol{e_l(\a_j)}(p)-\ol{e_l}{e_k}(\alpha_i)\ol{\a_j}(p)-\a_i\ol{e_l\ol{e_k}(\a_j)}(p)\\
=&R^g_{i\bj k\bl}(p)-g^{\bmu\lam}e_k(g_{i\bmu})\ol{e_{l}}(g_{\lam\bj})(p)-e_k(\alpha_i)\ol{e_l(\a_j)}(p).
\end{split}
\end{equation}
This completes the proof by processing the same as in the proof of Theorem \ref{thm-curv}.
\end{proof}
\begin{defn}
Let $(M,J)$ be a compact almost complex manifold. Denote the space of of holomorphic $(1,0)$-form on $M$ as $H^{1,0}(M)$.
\end{defn}
\begin{lem}\label{lem-double-holo-form}
Let $M^{2m}$ and $N^{2n}$ be two compact almost complex manifolds. Let $\phi_1,\phi_2,\cdots, \phi_r$ be a basis of $H^{1,0}(M)$ and
$\psi_1,\psi_2,\cdots,\psi_s$ be a basis of $H^{1,0}(N)$. Let $\rho$ be a holomorphic $(2,0)$-form on $M\times N$ and locally have the form of
\begin{equation}
\rho=\sum_{i=1}^m\sum_{j=1}^n\rho_{ij}\a^i\wedge\be^j
\end{equation}
 where $(\a^1,\a^2,\cdots,\a^m)$ and $(\be^1,\be^2,\cdots,\beta^n)$ are local (1,0)-frames on $M$ and $N$ respectively and $\rho_{ij}$'s are local smooth functions on $M\times N$. Then, there is a unique matrix $(a_{kl})$ of complex numbers with size $r\times s$ such that
 \begin{equation}
 \rho=\sum_{k=1}^r\sum_{l=1}^sa_{kl}\phi_k\wedge\psi_l.
 \end{equation}
\end{lem}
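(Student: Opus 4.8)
The plan is to exploit the product structure of $M\times N$ together with the pointwise criterion of Lemma \ref{lem-crt-pseudo-holo-form}, reducing the statement to a slicing argument in each factor. First I would observe that if $(e^M_1,\dots,e^M_m)$ is a local pseudo holomorphic frame on $M$ near $p$ and $(e^N_1,\dots,e^N_n)$ is one on $N$ near $q$, then their lifts form a local pseudo holomorphic frame of $M\times N$ at $(p,q)$: the bracket of an $M$-field with an $N$-field vanishes identically, while the two intra-factor brackets vanish at $(p,q)$ by hypothesis. The dual coframe is $(\a^1,\dots,\a^m,\be^1,\dots,\be^n)$, and writing $\rho=\sum_{ij}\rho_{ij}\a^i\wedge\be^j$, Lemma \ref{lem-crt-pseudo-holo-form} applied to the holomorphic form $\rho$ gives $\bar v(\rho_{ij})(p,q)=0$ for every $(1,0)$-vector $v$ at $(p,q)$; in particular this holds separately for $v$ tangent to $M$ and for $v$ tangent to $N$.

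Next I would carry out the slicing. Fix $q\in N$ and $Y\in T^{1,0}_qN$, and define the $(1,0)$-form $\rho_{Y,q}$ on $M$ by $\rho_{Y,q}(X)=\rho(X,Y)$; in the frame above its components are $c_i=\sum_j\rho_{ij}(\cdot,q)Y^j$. Differentiating along $M$ at an arbitrary point $p$ and using $\bar u(\rho_{ij})(p,q)=0$ for $M$-tangent $(1,0)$-vectors $u$, the criterion of Lemma \ref{lem-crt-pseudo-holo-form} on $M$ shows $\rho_{Y,q}$ is holomorphic, hence $\rho_{Y,q}\in H^{1,0}(M)$. Since it is linear in $Y$, expanding in the basis $\{\phi_k\}$ produces $(1,0)$-forms $\mu_k$ on $N$ with $\rho=\sum_k\pi_1^*\phi_k\wedge\pi_2^*\mu_k$. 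The symmetric argument, contracting the $M$-leg instead, shows that for each fixed $p$ and $X\in T^{1,0}_pM$ the form $Y\mapsto\rho(X,Y)$ is holomorphic on $N$.

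The crux is to upgrade the $\mu_k$ to holomorphic forms on $N$. Contracting $\rho=\sum_k\pi_1^*\phi_k\wedge\pi_2^*\mu_k$ with a $(1,0)$-vector $X$ at $p$ yields $\sum_k\phi_k(p)(X)\,\mu_k$, which by the symmetric slicing is holomorphic on $N$ for every pair $(p,X)$. The coefficient vectors $(\phi_1(p)(X),\dots,\phi_r(p)(X))$ span $\C^r$, because a nonzero relation $\sum_k c_k\phi_k(p)(X)=0$ holding for all $(p,X)$ would mean $\sum_k c_k\phi_k\equiv 0$, contradicting the linear independence of the basis $\{\phi_k\}$. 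Since the set of $(c_k)$ for which $\sum_k c_k\mu_k$ is holomorphic is a linear subspace of $\C^r$ containing a spanning set, each $\mu_k$ is itself holomorphic, so $\mu_k=\sum_l a_{kl}\psi_l$ with constants $a_{kl}$ and $\rho=\sum_{k,l}a_{kl}\phi_k\wedge\psi_l$. Uniqueness follows by evaluating a relation $\sum_{k,l}a_{kl}\phi_k\wedge\psi_l=0$ on pairs $(X,Y)$ and using that $\{\phi_k(X)\}$ and $\{\psi_l(Y)\}$ each span, forcing every $a_{kl}=0$. I expect this last spanning/linear-subspace step to be the main obstacle, since everything preceding it is a direct application of the pointwise holomorphicity criterion, whereas here the global linear independence of the chosen bases is genuinely used to pass from separate holomorphicity in each slice to constant coefficients.
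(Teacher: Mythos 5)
Your proof is correct and takes essentially the same route as the paper's: fix one factor, use the pointwise criterion of Lemma \ref{lem-crt-pseudo-holo-form} to show that contracting $\rho$ with a $(1,0)$-vector of $N$ yields a holomorphic $(1,0)$-form on $M$, expand in the basis $\{\phi_k\}$, show the resulting coefficient $(1,0)$-forms on $N$ are holomorphic, and expand those in $\{\psi_l\}$ with constant coefficients. The only difference is one of rigor rather than strategy: where the paper asserts it is ``easy to check'' that the coefficient forms on $N$ are holomorphic, you supply the spanning/linear-algebra argument (the vectors $(\phi_1(p)(X),\dots,\phi_r(p)(X))$ span $\C^r$ by global linear independence of the $\phi_k$, so each $\mu_k$ lies in the subspace of holomorphic combinations), which is precisely the verification the paper's proof leaves implicit, since the $\phi_k$ need not be pointwise linearly independent.
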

\begin{proof}
Fixed $y\in N$, let $(e_1,e_2,\cdots,e_n)$ be a local pseudo holomorphic frame at of $N$ at $y$ and $(\omega^1,\om^2,\cdots,\om^n)$ be its dual frame. Then,
\begin{equation}
\rho=\sum_{j=1}^n\theta_j(y)\wedge \omega^j(y)
\end{equation}
where $\theta_j(y)$'s are $(1,0)$-forms on $M=M\times\{y\}$. By Lemma \ref{lem-crt-pseudo-holo-form}, it is easy to check that $\theta_j(y)$ is a holomorphic $(1,0)$-form on $M$ for any $j$. Then
\begin{equation}
\theta_j(y)=\sum_{k=1}^rb_{kj}(y)\phi_k.
\end{equation}
Therefore
\begin{equation}
\rho=\sum_{k=1}^r\phi_k\wedge \sum_{j=1}^n b_{kj}(y)\omega^j(y).
\end{equation}
It easy to check that $\sum_{j=1}^n b_{kj}(y)\omega^j(y)$ is a holomorphic $(1,0)$ form on $N$. Hence
\begin{equation}
\sum_{j=1}^n b_{kj}(y)\omega^j(y)=\sum_{l=1}^sa_{kl}\psi_l
\end{equation}
where $a_{kl}$ are complex numbers. This completes the proof.
\end{proof}

With help of Lemma \ref{lem-double-holo-form} and the curvature formula in Theorem \ref{thm-curv}, the same argument as in \cite{Yu} give us the following
classification of almost Hermitian metrics on product of compact almost complex manifolds which generalizes the result of Zheng \cite{Zheng} and a previous result of the author \cite{Yu}.
\begin{thm}\label{thm-class-prod}
Let $M$ and $N$ be compact almost complex manifolds.
Let $\phi_1,\phi_2,\cdots, \phi_r$ be a basis of $H^{1,0}(M)$ and
$\psi_1,\psi_2,\cdots,\psi_s$ be a basis of $H^{1,0}(N)$. Then, for
any almost Hermitian metric $h$ on $M\times N$ with nonpositive holomorphic
bisectional curvature,
\begin{equation*}
\omega_h=\pi_1^{*}\omega_{h_1}+\pi_2^{*}\omega_{h_2}+\rho+\bar\rho
\end{equation*}
where $h_1$ and $h_2$ are almost Hermitian metrics on $M$ and  $N$ with
nonpositive holomorphic bisectional curvature respectively, $\pi_1$
and $\pi_2$ are natural projections from $M\times N$ to $M$ and from $M\times N$ to
$N$ respectively, and
\begin{equation*}
\rho=\sqrt{-1}\sum_{k=1}^{r}\sum_{l=1}^{s}a_{kl}\phi_k\wedge\ol{\psi_l}
\end{equation*}
with $a_{kl}$'s are complex numbers.
\end{thm}
Similarly as in \cite{Yu}, with the help of Theorem
\ref{thm-curv-2}, we can obtain the following corollary.
\begin{cor}
Let $M$ and $N$ be two almost Hermitian manifolds with $\mathcal H(M)\neq\emptyset$ and $\mathcal H(N)\neq\emptyset$. Then
$$\mbox{codim}_\R(\mathcal{H}(M)\times
\mathcal{H}(N),\mathcal{H}(M\times N))=2\dim H^{1,0}(M)\cdot
\dim H^{1,0}(N).$$
\end{cor}

\end{document}